\newtheorem{theorem}{Theorem}[section]
\newtheorem{lemma}[theorem]{Lemma}
\newtheorem{corollary}[theorem]{Corollary}
\newtheorem{proposition}[theorem]{Proposition}
\newtheorem{algorithm}[theorem]{Algorithm}
\theoremstyle{definition}
\theoremstyle{remark}
\newtheorem{remark}[theorem]{Remark}
\numberwithin{equation}{section}
\newcommand{\ml}{\mathcal L}
\newcommand{\lip}{\text{Lip}}
\newcommand{\loc}{\text{loc}}
\newcommand{\proj}{\text{Proj}}
\begin{document}
\title{Invariant densities and escape rates:\\ Rigorous and computable approximations in the $L^{\infty}$-norm}
\author{Wael Bahsoun}
    \address{Department of Mathematical Sciences, Loughborough University, 
Loughborough, Leicestershire, LE11 3TU, UK}
\email{W.Bahsoun@lboro.ac.uk}
\author{Christopher Bose}
\address{Department of Mathematics and Statistics, University of Victoria,  
   PO BOX 3045 STN CSC, Victoria, B.C., V8W 3R4, Canada}
\email{cbose@uvic.ca}
\subjclass{Primary 37A05, 37E05}
\date{\today}

\thanks{CB is supported by an NSERC grant.  WB acknowledges the hospitality of the 
Department of Mathematics and Statistics, University of Victoria, where much of this research was 
carried out}
\keywords{Interval Maps, Transfer Operator,  Invariant Densities, Escape Rates,  Approximation, 
Spectral Perturbation}
\begin{abstract}
In this article we study a piecewise linear discretization schemes for transfer operators (Perron-Frobenius operators) associated with interval maps. We show how these can be used to  provide rigorous {\bf pointwise} approximations for invariant densities of Markov interval maps. We also derive the order of convergence of the approximate invariant density to the real one in the $L^{\infty}$-norm. The outcome of this paper complements rigorous results on $L^1$ approximations of invariant densities \cite{KMY} and recent results on the formulae of escape rates of open dynamical systems \cite{KL2}. We implement our computations on two examples (one rigorous and one non-rigorous) to illustrate the feasibility and efficiency of our 
schemes.\end{abstract}
\maketitle
\pagestyle{myheadings} 
\markboth{Rigorous Approximations In The $L^{\infty}$-Norm}{W. Bahsoun And C. Bose}
\section{Introduction}
Although this article is about the approximation of invariant densities for interval maps, it is intimately  related to what are commonly termed \textit{open dynamical systems} or maps with `holes' \cite{DY}. Open dynamical systems have become a very active area of research. In part, this is due to their connection to metastable dynamical systems \cite{FS, GHW} and their applications in earth and ocean sciences \cite{DF, SFM}. 
Corresponding to invariant measures for closed dynamics,  in open dynamical systems, long-term statistics are described by a \textit{conditionally invariant measure} and its related \textit{escape rate}, measuring the mass lost from the system per unit time \cite{DY}. 

\bigskip

In their recent article \cite{KL2}, Keller and Liverani obtained precise escape rate formulae for Lasota-Yorke maps with holes shrinking to a single point. These formulae depend, \textit{pointwise}, on the invariant density of the corresponding closed system. Unfortunately, explicit formulae of invariant densities for Lasota-Yorke maps are, in general, unavailable. Thus, to complement the result of \cite{KL2}, it is natural to consider numerical schemes which provide rigorous and computable  pointwise approximations
of invariant densities.

\bigskip

In the literature, rigorous approximation results are available in the $L^1$-norm \cite{KMY},
the $L^{\infty}$-norm \cite{BIS} and in the $BV$-norm, the space of function of bounded variation, \cite{Ha,DL}, for example. None of these methods 
are well-suited to our problem.  For example $L^1$ approximations cannot 
provide the pointwise information necessary for application of the formulae of \cite{KL2} (see Section 7 in \cite{BB}). Convergence schemes in the $L^{\infty}$-norm and $BV$-norm cannot provide a computable error bound as the constant in their approximation error depends on the invariant density itself, which is 
\textit{a priori \/} unknown. 

\bigskip

By rigorous and computable approximation we mean the following. Assume we are given a transformation
$\tau$ (typically, a formula) and an error tolerance,  for example $\Delta := 10^{-2}$. We choose a suitable discretization 
scheme for the transfer (Frobenius-Perron) operator associated to $\tau$ and we are asked to determine an explicit 
level of discretization $\epsilon$ such that the approximate invariant density $f^*_\epsilon$  for the discretize operator at level $\epsilon$ satisfies
\begin{equation}\label{eqn:approx}
\| f^* - f^*_\epsilon\|_\infty \leq \Delta
\end{equation}
Here $f^*$ is the invariant density for $\tau$. 
We emphasize that we assume the continuous density $f^*$ is unknown throughout this 
calculation.   By computable we mean that, at each step, one can determine via an algorithm, 
within a finite number of steps, each quantity necessary to determine\footnote{Of course, the efficiency of such an algorithm is an important issue. For our purpose, we will be satisfied with algorithms that can be implemented in standard mathematical software on a personal computer. Beyond that, we do not specifically address computational efficiency in this article} $f^*_\epsilon$ and to guarantee Inequality (\ref{eqn:approx}).

One novelty of our approach is that we will need to consider two different discretization schemes in order 
to carry out this task, both based on binned discretization of the state space.   
The results in this paper enable us to compute the number of bins $m$, with  $\epsilon=m^{-1}$, and the associated
approximate
density, as usual denoted  $f^*_m$ which achieves the tolerance $\Delta$, uniformly.

\bigskip 

The first ingredient in our analysis uses a natural piecewise linear discretization scheme and the abstract perturbation result of \cite{KL}. The two Banach spaces involved in our computation are $L^{\infty}$, and the space of Lipschitz continuous functions on the unit interval. The same Banach spaces were used in \cite{Fr} to provide a computer-assisted estimate on the rate of decay of correlations. However, the discretization scheme which was used in \cite{Fr} is the traditional Ulam method.  Ulam's method does not fit in our setting since it does not preserve the regularity of the space of Lipschitz continuous functions. The idea of our method is to first  prove an appropriate Lasota-Yorke inequality for transfer operators associated with Markov interval maps,  then to construct a discretized transfer operator which preserves the regularity of the space of Lipschitz continuous functions and which is close, in some suitable norm, to the original transfer operator. Although, neither the original transfer operator nor its discretized counterpart is a contraction in the $L^{\infty}$-norm\footnote{Unlike the $L^1$ setting where the norm of the transfer operator is automatically $||\ml ||_1\le 1$, here we can only show that $|\ml|\le M$, where $M$ is typically greater than or equal to 1.}, we obtain quasi-compactness of the original transfer operator, thanks to the result of \cite{He}. We use the general setting of \cite{KL} which allows the study of perturbations of transfer operators which are not necessarily contractions on either  Banach space. With this machinery, we can compute an explicit upper bound on the norm of the resolvent, bounded away from the spectrum, of the transfer operator associated with the map. Once an upper bound on the norm of the resolvent is computed, we use a second discretization scheme, whose associated finite rank operator is Markov, to compute an approximate invariant density with the pre-specified error tolerance $\Delta$. 

\bigskip

The reason for using two different discretizations in our method is the following: The first discretization has the projection property. This property is essential in the proofs related to the computation of  an an explicit upper bound on the norm of the resolvent. Moreover, it produces reasonable constants\footnote{This is very important from computational point of view. In particular, smaller constants means that less time will be spent on the computer to produce the desired computation.} which are needed when using the perturbation result of \cite{KL}. However, this natural discretization leads to a non-Markovian finite rank operator. The lack of the Markov property makes the (theoretical) rate of convergence slow. Thus, at
the next stage, we use a different discretization, which lacks the projection property\footnote{Thus, we could not use the Markov discretization right from the beginning.}, but produces a finite rank operator which is Markov. With this Markov scheme we will obtain a computable rate of convergence which is of order $m^{-1} \, \ln m$.  

\bigskip

In Section \ref{Spaces} we set up our notation and assumptions. We also recall known results on Markov interval maps which are needed in the sequel.  In Section \ref{Inequality} we provide a Lasota-Yorke inequality on the space of Lipschitz continuous functions. In Section \ref{Discretization} we present our two discretization schemes and prove results about their regularity properties when acting on the space of Lipschitz continuous functions. In Section \ref{Perturbation} we present the perturbation result of \cite{KL} as a sequence of steps which are necessary for rigorous computations and in Section \ref{sec:KL_applied} we apply the perturbation result to our problem. The main challenge of this paper lies in this section where we design an algorithm which enables one to rigorously compute an upper bound on the norm of the resolvent of the continuous transfer operator. The resolvent estimate can then be used  to obtain a rate of convergence $Cm^{-1} \, \ln m$ in Section \ref{Rates},  where $C$ is a computable constant. In Section \ref{Exam} we implement the algorithm of Section \ref{sec:KL_applied} on a specific Markov interval map. Numerical results are reported for all critical constants. In particular, we compute the number 
of bins ($m=7 \times 10^6$) that guarantee approximation of $f^*$ by $f_m^*$ within tolerance $\Delta = 10^{-2}$. Section \ref{discuss} contains a discussion of an alternate projection based (non Markov) scheme and in general,  on efficiency of both piecewise linear discretization schemes for uniform 
approximation. 

\section{Assumptions and Notation}\label{Spaces}
\subsection{Function spaces}
Let $(I,\mathfrak B, \lambda)$ denote the measure space where $I:=[0,1]$, $\mathfrak B$ is Borel $\sigma$-algebra and $\lambda$ is Lebesuge measure on $I$. Let $L^p= L^{p}(I)=L^{p}(I,\mathfrak B,\lambda)$
for $1\leq p \leq \infty$.  Let $\mathcal C^{\lip}(I)$ denote the space of Lipschitz continuous functions on $I$. We equip $\mathcal C^{\lip}(I)$ with the norm
$$||\cdot||=\lip(\cdot)+|\cdot|,$$ 
where $\lip(\cdot)$ is the Lipschitz constant of a function and $|\cdot|$ is its $L^{\infty}$ norm. The Arzel\`a-Ascoli theorem implies that the unit ball of $||\cdot||$ is $|\cdot|$-compact\footnote{See \cite{DS} for this and other basic properties about the Banach space $\mathcal C^{\lip}(I)$.} 
\subsection{Markov maps of the unit interval}
Let $\tau:I\to I$ be a measurable transformation. We assume that there exists a partition $\mathcal P $ of $I$, $\mathcal P=\{c_i\}_{i=0}^q$ such that
\begin{enumerate}
\item for each $i=1,\dots, q$, $\tau_i:=\tau_{|(c_{i-1}, c_i)}$ is monotone, $C^{1+\lip}$ and it extends to a $C^1$ function on $[c_{i-1}, c_i]$;
\item $(\tau_i^{-1})':[\tau(c_{i-1}),\tau(c_i)]\to[c_{i-1},c_i]$ is a Lipschitz continuous function for each $i$.
\item there exits a number $\alpha_0$ such that $\frac{1}{|\tau_i'|}\le\alpha_0<1$;
\item for each $i,j=1,\dots,q$, if $\tau((c_{i-1}, c_i))\cap(c_{j-1}, c_j)\not=\emptyset$, then $\tau((c_{i-1}, c_i))\supseteq(c_{j-1}, c_j);$
\item there exists a $k_0\ge 1$ such that $\tau^{k_0}$ is piecewise onto.
\end{enumerate}
Let $\ml_{\tau}:L^1\to L^1$ denote the transfer operator (Perron-Frobenius) \cite{Ba} associated to $\tau$:
$$\ml_{\tau} f(x)=\sum_{y=\tau^{-1}x}\frac{f(y)}{|\tau'(y)|}.$$

\subsection{Results implied by the assumptions on $\tau$} Assumptions (1)-(5) imply (see \cite{Bo}) that
$\ml_\tau:\mathcal C^{\lip}_{\loc}(I) \rightarrow \mathcal C^{\lip}_{\loc}(I)$, the space of locally 
Lipschitz functions (relative to the Markov partition $\mathcal{P}$) and 
$\ml_{\tau^k}:C^\lip(I) \rightarrow C^\lip(I)$ whenever $k \geq k_0$. Furthermore,  $\tau$ 
admits a unique absolutely continuous probability measure $d\nu = f^* d\lambda$ equivalent to Lebesgue, 
with $f^* \in \mathcal C^{\lip}(I)$ and uniformly bounded away from zero (\cite{Bo} Theorem 0.1). 
Further properties of the system $(\tau , \nu)$ are as follows:
\begin{enumerate}
\item The system is Bernoulli (\cite{Bo} Theorem 0.3) hence $\tau$ is mixing with 
respect to $\nu$. 
\item The eigenvalue $1$ is a simple eigenvalue for the operator $\ml_\tau$ 
acting on either of the Banach spaces $L^2(I, \nu)$ or $\mathcal C^{\lip}_{\loc}(I)$.
\item  $1$ is the only eigenvalue of unit modulus for the operator $\ml_\tau$ 
acting on either Banach space.
\end{enumerate}
A useful regularity estimate playing a critical role in the proof 
of Theorem 0.1 in \cite{Bo} is the following:
\begin{equation}\label{eq:lip_bound}
B_0:= \sup_{n \geq k_0} \lip( \ml_\tau^n \mathbf{1}) < \infty
\end{equation}
\section{Lasota-Yorke inequality on $\mathcal C^{\lip}(I)$}\label{Inequality}
\begin{lemma}\label{Le1}
For all $n\ge 1$
$$|\ml_{\tau}^n|\le M,$$
where $M=B_0+1$.
\end{lemma}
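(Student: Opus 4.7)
The plan is to exploit the positivity of $\mathcal{L}_\tau$ together with its integral preservation property to reduce the $L^\infty$ operator bound to a pointwise estimate on $\mathcal{L}_\tau^n \mathbf{1}$, and then to bound the latter using the Lipschitz control (\ref{eq:lip_bound}).

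First I would reduce to the scalar bound. Since $\mathcal{L}_\tau$ is a positive operator (it maps nonnegative functions to nonnegative functions, as is immediate from the formula $\mathcal{L}_\tau f(x) = \sum_{y \in \tau^{-1}(x)} f(y)/|\tau'(y)|$), for any $f \in L^\infty$ we have
$$|\mathcal{L}_\tau^n f(x)| \leq \mathcal{L}_\tau^n |f|(x) \leq |f|_\infty \, \mathcal{L}_\tau^n \mathbf{1}(x),$$
so, taking essential supremum in $x$, it suffices to prove $|\mathcal{L}_\tau^n \mathbf{1}|_\infty \leq M$ for every $n \geq 1$.

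Next I would combine two standard ingredients. Integral preservation gives $\int_I \mathcal{L}_\tau^n \mathbf{1}\,d\lambda = \int_I \mathbf{1}\,d\lambda = 1$, so there exists $x_0 \in I$ at which $\mathcal{L}_\tau^n \mathbf{1}(x_0) \leq 1$. For $n \geq k_0$ the function $\mathcal{L}_\tau^n \mathbf{1}$ belongs to $\mathcal{C}^{\lip}(I)$ with Lipschitz constant at most $B_0$ by (\ref{eq:lip_bound}). Since $|x - x_0| \leq 1 = \mathrm{diam}(I)$ for every $x \in I$, this yields
$$\mathcal{L}_\tau^n \mathbf{1}(x) \leq \mathcal{L}_\tau^n \mathbf{1}(x_0) + B_0\,|x - x_0| \leq 1 + B_0 = M,$$
which together with the first step gives $|\mathcal{L}_\tau^n| \leq M$ in the main regime $n \geq k_0$.

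The main obstacle is the small-$n$ range $1 \leq n < k_0$, where $\mathcal{L}_\tau^n \mathbf{1}$ is only guaranteed to be locally Lipschitz on the Markov partition $\mathcal{P}$, so a single reference point $x_0$ need not control the whole interval. I would handle this by running the same mean-value-plus-Lipschitz argument on each Markov cylinder of level $n$ separately, using that the integral of $\mathcal{L}_\tau^n \mathbf{1}$ on any cylinder $J$ is at most $1$ to locate a reference point $x_J \in J$ with a useful value, and then using the local Lipschitz bound to control the oscillation across $J$. Because only finitely many indices below $k_0$ are involved, any constants produced this way can be absorbed into $M$. The substantive content of the lemma is thus the short argument above combining positivity, integral preservation, and the Lipschitz estimate (\ref{eq:lip_bound}).
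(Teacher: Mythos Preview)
Your main argument is exactly the paper's: reduce via positivity of $\mathcal L_\tau$ to bounding $|\mathcal L_\tau^n\mathbf 1|$, then combine the unit $L^1$-norm of $\mathcal L_\tau^n\mathbf 1$ (giving a point with value $\le 1$) with the Lipschitz bound $B_0$ from~(\ref{eq:lip_bound}) to get $|\mathcal L_\tau^n\mathbf 1|\le 1+B_0=M$.

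You go further than the paper by flagging the range $1\le n<k_0$, where~(\ref{eq:lip_bound}) does not apply and $\mathcal L_\tau^n\mathbf 1$ is only piecewise Lipschitz on the Markov partition. The paper's own proof simply ignores this case. Your proposed patch, however, does not close the gap as stated: $M$ is the \emph{specific} number $B_0+1$, so constants manufactured from local Lipschitz bounds on finitely many cylinders cannot be ``absorbed into $M$.'' The honest reading is that the inequality with this particular $M$ is established only for $n\ge k_0$; this suffices for the paper, since immediately afterward one passes to $T=\tau^k$ with $k\ge k_0$, and every subsequent use of the bound (e.g.\ Lemma~\ref{Le5}) involves $\mathcal L_\tau^{kn}$ with $kn\ge k_0$. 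So your extra paragraph correctly identifies a wrinkle in the statement but over-promises on the fix; the substantive proof you give matches the paper's.
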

\begin{proof}
Since each function $\ml^n \mathbf{1}$ is nonnegative on $I$ with 
unit $L^1-$norm, it follows
that $| \ml^n \mathbf{1}| \leq 1 + \lip(\ml^n \mathbf{1})$.
The result now follows from  (\ref{eq:lip_bound}). 
\end{proof}
We now fix a $k\ge k_0$ such that $\alpha=M\alpha^k_0<1$. Let $T:=\tau^k$ and $\ml=\ml_\tau^k$ be the transfer operator associated to $T$. We also set $\mathcal P_0=\{a_i\}_{i=0}^{l}$ to be the partition of $I$ which consists of the intervals of monotonicity of $T$.
\begin{remark}
By the mixing property of $(\tau, \nu)$
 $\tau$ and $T$ have the same unique invariant density $f^*$. While our goal is to provide a rigorous approximation of the density $f^*$ for $\tau$ in the $L^{\infty}$ norm, we will achieve this goal by working with $T$  instead of $\tau$.
\end{remark}
\begin{lemma}\label{Le2} We have the following 
Lasota-Yorke-type inequality for  $\ml: \mathcal C^{\lip}(I) \rightarrow  \mathcal C^{\lip}(I)$ and
$$\lip(\ml f)\le\alpha\lip(f)+B_1|f|,$$
where $B_1:= \lip(\ml_\tau^k \mathbf{1})~(=\lip(\ml \mathbf{1}))$.
\end{lemma}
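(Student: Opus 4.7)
Since $k\geq k_0$, assumption (5) guarantees that $T = \tau^k$ is piecewise onto on the partition $\mathcal{P}_0=\{a_i\}_{i=0}^{l}$. Writing $\phi_i(x) := |(T_i^{-1})'(x)|$, the transfer operator then admits the explicit branch representation
\begin{equation*}
\ml f(x) = \sum_{i=1}^{l}\phi_i(x)\,f(T_i^{-1}(x)), \qquad \ml \mathbf{1}(x) = \sum_{i=1}^{l}\phi_i(x),
\end{equation*}
where each $\phi_i$ is defined on all of $I$, bounded by $\alpha_0^k$ by iterating assumption (3), and Lipschitz by iterating assumption (2) along the $k$-fold composition of inverse branches of $\tau$.

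The plan is to expand the Lipschitz difference via the standard add-and-subtract decomposition
\begin{equation*}
\ml f(x) - \ml f(y) = \sum_i \phi_i(x)\bigl[f(T_i^{-1}(x))-f(T_i^{-1}(y))\bigr] + \sum_i f(T_i^{-1}(y))\bigl[\phi_i(x)-\phi_i(y)\bigr],
\end{equation*}
which separates the variation of $f$ along pullbacks from the variation of the branch Jacobians. For the first sum I would apply $|f(T_i^{-1}(x)) - f(T_i^{-1}(y))|\leq\lip(f)\,\alpha_0^k\,|x-y|$, collapse $\sum_i\phi_i(x) = \ml\mathbf{1}(x)$, and invoke Lemma \ref{Le1} to bound $\ml\mathbf{1}(x)\leq M$, producing a contribution of $\alpha\,\lip(f)\,|x-y|$ with $\alpha = M\alpha_0^k$.

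For the second sum the $B_1 |f|$ term must emerge. The clean step is to factor $\|f\|_\infty$ out and identify the remaining telescoped sum with $\ml\mathbf{1}(x)-\ml\mathbf{1}(y)$, whose absolute value is at most $B_1|x-y|$ by definition of $B_1 = \lip(\ml \mathbf{1})$. I expect this to be the principal obstacle: the naive estimate only yields $|f|\sum_i|\phi_i(x)-\phi_i(y)|$, which coincides with $|f|\,|\ml\mathbf{1}(x)-\ml\mathbf{1}(y)|$ exactly when the differences $\phi_i(x)-\phi_i(y)$ share a common sign across branches. Handling this requires either a sign-coherence argument afforded by the Markov-onto structure, or writing $\phi_i(x)-\phi_i(y) = \int_y^x \phi_i'(s)\,ds$, interchanging sum and integral, and controlling the weighted sum of derivatives by (the derivative version of) $\lip(\ml\mathbf{1}) = B_1$. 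Once both sums are bounded, dividing by $|x-y|$ and taking the supremum over $x\neq y$ delivers the stated Lasota-Yorke-type inequality.
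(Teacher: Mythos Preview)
Your decomposition and your treatment of the first sum coincide exactly with the paper's proof: the paper also adds and subtracts, bounds $|T_i^{-1}(x)-T_i^{-1}(y)|\le\alpha_0^k|x-y|$, collapses $\sum_i\phi_i(x)=\ml\mathbf{1}(x)\le M$, and obtains $\alpha\,\lip(f)\,|x-y|$.

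Where you hesitate, the paper does not. For the second sum the paper simply writes
\[
|f|\sum_{i=1}^{l}\bigl|(T_i^{-1})'(x)-(T_i^{-1})'(y)\bigr|\ \le\ \lip(\ml\mathbf{1})\,|f|\,|x-y|
\]
with no further justification. Your instinct that this step is delicate is correct: in general $\sum_i|\phi_i(x)-\phi_i(y)|$ can strictly exceed $\bigl|\sum_i\phi_i(x)-\sum_i\phi_i(y)\bigr|$, so the passage to $\lip(\ml\mathbf{1})$ is not an automatic consequence of the triangle inequality. Neither of your proposed fixes actually closes the gap, however. Sign coherence of the $\phi_i(x)-\phi_i(y)$ across branches is not forced by the Markov/onto hypotheses alone, and the integral representation $\phi_i(x)-\phi_i(y)=\int_y^x\phi_i'(s)\,ds$ followed by interchange of sum and integral again leaves you with $|f|\int_y^x\sum_i|\phi_i'(s)|\,ds$, which is governed by $\sum_i\lip(\phi_i)$ rather than by $\lip\bigl(\sum_i\phi_i\bigr)=\lip(\ml\mathbf{1})$.

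In short, your plan reproduces the paper's argument verbatim and you have correctly isolated its one unguarded step. The clean and honest bound is $B_1':=\sum_{i=1}^{l}\lip\bigl((T_i^{-1})'\bigr)\ge\lip(\ml\mathbf{1})$; this constant is still explicitly computable from $\tau$ and is all that is needed downstream, so the discrepancy is cosmetic rather than structural.
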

\begin{proof}
Let $x,y\in I$. For $f\in\mathcal C^{\lip}(I)$ we have
\begin{equation*}
\begin{split}
|\ml f(x)-\ml f(y)|&=|\sum_{i=1}^{l}\frac{f(T_i^{-1}x)}{|T'(T_i^{-1}x)|}-\sum_{i=1}^{l}\frac{f(T_i^{-1}y)}{|T'(T_i^{-1}y)|}|\\
&\le\sum_{i=1}^{l}|\frac{f(T_i^{-1}x)-f(T_i^{-1}y)}{|T'(T_i^{-1}x)|}|+\sum_{i=1}^{l}|\frac{f(T_i^{-1}y)}{|T'(T_i^{-1}x)|}-\frac{f(T_i^{-1}y)}{|T'(T_i^{-1}y)|}|\\
&\le\lip(f)\sum_{i=1}^{l}|\frac{T_i^{-1}(x)-T_i^{-1}(y)}{|T'(T_i^{-1}x)|}|+|f|\sum_{i=1}^{l}|(T_i^{-1})'(x)-(T_i^{-1})'(y)|\\
&\le\alpha_0^k\sum_{i=1}^{l}\frac{1}{|T'(T_i^{-1}x)|}\lip(f)|x-y|+\lip(\ml \mathbf{1})|f||x-y|\\
&\le\alpha\lip(f)|x-y|+B_1|f||x-y|.
\end{split}
\end{equation*}
\end{proof}
\begin{corollary}\label{Cor0}
For $f\in\mathcal C^{\lip}(I)$ and for all $n\ge1$, we have 
$$||\ml^n f||\le\alpha^n||f||+M(1+\frac{B_1}{1-\alpha})|f|.$$
\end{corollary}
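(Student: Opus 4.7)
The plan is a straightforward iteration of the Lasota--Yorke inequality from Lemma \ref{Le2}, combined with the uniform $L^\infty$ bound from Lemma \ref{Le1}. Since $\mathcal{L} = \mathcal{L}_\tau^k$, the bound $|\mathcal{L}^n f| = |\mathcal{L}_\tau^{nk} f| \le M |f|$ is immediate from Lemma \ref{Le1}, giving the easy half of the target inequality (the part $M |f|$ in the second summand).

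The main work is estimating $\lip(\mathcal{L}^n f)$. Applying Lemma \ref{Le2} to $g = \mathcal{L}^{n-1} f$ gives $\lip(\mathcal{L}^n f) \le \alpha \lip(\mathcal{L}^{n-1} f) + B_1 |\mathcal{L}^{n-1} f|$, and iterating this relation $n$ times produces
$$\lip(\mathcal{L}^n f) \le \alpha^n \lip(f) + B_1 \sum_{j=0}^{n-1} \alpha^{j} |\mathcal{L}^{n-1-j} f|.$$
Now I would invoke Lemma \ref{Le1} on each term inside the sum, replacing $|\mathcal{L}^{n-1-j} f|$ by $M |f|$, and then bound the resulting geometric series by $\frac{1}{1-\alpha}$ using the assumption $\alpha < 1$ (which was arranged when we fixed $k \ge k_0$ so that $\alpha = M\alpha_0^k < 1$). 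This yields
$$\lip(\mathcal{L}^n f) \le \alpha^n \lip(f) + \frac{M B_1}{1-\alpha} |f|.$$

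Adding this to the $L^\infty$ estimate $|\mathcal{L}^n f| \le M |f|$ and using $\lip(f) \le \|f\|$ in the first term gives exactly
$$\|\mathcal{L}^n f\| \le \alpha^n \|f\| + M\Bigl(1 + \tfrac{B_1}{1-\alpha}\Bigr) |f|,$$
as claimed. There is no real obstacle here; the only point to be a bit careful about is making sure the induction is set up so that the constant in front of $|f|$ is uniform in $n$, which is precisely what the convergence of the geometric series $\sum_{j \ge 0} \alpha^j$ guarantees.
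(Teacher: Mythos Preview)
Your argument is correct and is exactly the expansion of what the paper indicates by ``follows from Lemmas~\ref{Le2} and~\ref{Le1}'': iterate the one-step Lasota--Yorke estimate for $\lip(\cdot)$, bound each $|\ml^{n-1-j}f|$ by $M|f|$ (noting $M\ge1$ handles the $j=n-1$ term), sum the geometric series, and add the $L^\infty$ bound $|\ml^n f|\le M|f|$.
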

\begin{proof}
The proof follows from Lemmas \ref{Le2} and \ref{Le1}.
\end{proof}
\begin{remark}\label{rem:simple_spectral_pic}
The inequality in Corollary \ref{Cor0}, combined with the mixing 
property of $T$ gives the following spectral picture for $\ml$ acting on $\mathcal C^{\lip}(I)$:
\begin{enumerate}
\item The essential spectral radius\footnote{See, for example Hennion \cite{He} for an elegant proof 
of this fact.} of $\ml$ is $\rho_{\textnormal{ess}} \leq \alpha < 1$
\item The spectrum of $\ml$ outside of the disc $\{\rho \leq \alpha\}$ consists of 
a simple eigenvalue at $\rho = 1$ corresponding to the unique invariant density 
for $T$ (or $\tau$). 
\end{enumerate}
This picture will play a critical role in what follows. 
\end{remark}
\subsection{Estimates on the relevant constants}
The key expression from the previous section is the inequality in Corollary \ref{Cor0}. 
Reviewing the constants that need to be estimated, 
$\alpha$ and $B_1$ are easily computed directly from 
the definition of $\tau$, but the constant $M$, which is defined in 
terms of $B_0$ may not
be so straightforward.
One favourable situation is when
a variational Lasota-Yorke inequality is available for $\tau$. In that case we can 
estimate $M$ directly as follows. We assume $\ml_\tau$ satisfies
\begin{equation}\label{eq:variation_estimate}
V \ml_\tau^n f \leq A_0 \alpha^n V f + B \| f \|, ~~n \geq 1
\end{equation}
where $V(\cdot)$ denotes the usual notion of variation for $L^1$ functions
and $\| \cdot \|$ the $L^1-$ norm.
Then we simply observe
\begin{equation*}
|\ml_{\tau}^n| \leq |\ml_\tau^n \mathbf{1}| \leq V( \ml_\tau^n \mathbf{1}) + \|  \ml_\tau^n \mathbf{1}\| \leq
 A_0 \alpha^n V \mathbf{1} + B \| \mathbf{1}\| + 1 = B + 1
\end{equation*}
so we can set $M:= B+1$.


\section{Two piecewise linear discretization schemes for $\ml$}\label{Discretization}
\subsection{Projection-Based Discretization}
Let $\eta=\{b_{i}\}_{i=0}^{m}$ be a partition of $I$ into intervals of size at most $\varepsilon$; i.e., $\text{mesh}(\eta)\le\varepsilon$. Since uniform partitions are the first choice for numerical work, we will 
assume for the rest of this article that $b_i - b_{i-1} = \frac{1}{m}$.  Everything we do could be modified 
for non-uniform partitions with only minor notational changes.  Let 
$$\varphi_{i}=\chi_{[b_{i-1},b_{i}]}, z_{i}=[\int_{0}^1\varphi_{i}d\lambda]^{-1}=\frac{1}{b_i-b_{i-1}}=m \text{ and } \phi_{i}(x)=m\int_0^{x}\varphi_{i}d\lambda.$$
For $f: I \rightarrow \mathbb{R}$ define  
\begin{equation*}
\begin{split}
\Pi_mf=&f(b_{0})+\sum_{i=1}^{m}\phi_{i}[f(b_{i})-f(b_{i-1})]\\
&=f(b_{0})(1-\phi_{1})+f(b_{m})\phi_{m}+\sum_{i=1}^{m-1}f(b_{i})(\phi_{i}-\phi_{i}).
\end{split}
\end{equation*}
Thus, $\Pi_mf$ is a piecewise linear function with respect to $\eta$; moreover, $\Pi_mf(b_i)=f(b_i)$, $i=0,\dots,m$. We shall write $\Pi_m$ in a more compact form
$$\Pi_mf=\sum_{i=0}^{m}f(b_{i})\psi_{i},$$
where the basis $\psi_i$ is a set of {\bf hat} functions over $\eta$:
\begin{equation}\label{hat}
\psi_{0}:=(1-\phi_{1})\,, \psi_{m}:=\phi_{m}\text{ and for }i=1,\dots,m-1\,, \psi_{i}:=(\phi_{i}-\phi_{i+1}).
\end{equation}
\begin{lemma}\label{Le3} The operator $\Pi_m$ is a projection; i.e., 
$$\Pi_m^2=\Pi_m.$$
\end{lemma}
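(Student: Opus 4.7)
The plan is to verify directly that applying $\Pi_m$ twice gives $\Pi_m$. The key observation is that the hat functions $\{\psi_i\}_{i=0}^m$ satisfy the interpolation property $\psi_i(b_j)=\delta_{ij}$, from which the projection property follows essentially by inspection of the expansion $\Pi_m f = \sum_{i=0}^m f(b_i)\psi_i$.

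First I would compute $\phi_k(b_j)$ explicitly from the definition $\phi_k(x) = m\int_0^x \varphi_k\,d\lambda = m\int_0^x \chi_{[b_{k-1},b_k]}\,d\lambda$. Since the partition is uniform with $b_j = j/m$, this integral vanishes when $j \leq k-1$ and equals $m \cdot \frac{1}{m} = 1$ when $j \geq k$. In other words $\phi_k(b_j) = \mathbf{1}_{\{j \geq k\}}$. Applying this to the three cases in the definition of the hat functions:
\begin{itemize}
\item $\psi_0(b_j) = 1 - \phi_1(b_j) = 1 - \mathbf{1}_{\{j \geq 1\}} = \delta_{0j}$;
\item $\psi_m(b_j) = \phi_m(b_j) = \mathbf{1}_{\{j \geq m\}} = \delta_{mj}$ (recall $0 \leq j \leq m$);
\item for $1\leq i\leq m-1$, $\psi_i(b_j) = \phi_i(b_j) - \phi_{i+1}(b_j) = \mathbf{1}_{\{j \geq i\}} - \mathbf{1}_{\{j \geq i+1\}} = \delta_{ij}$.
\end{itemize}

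Consequently, for any $f:I\to \mathbb{R}$ and any node $b_j$,
\[
(\Pi_m f)(b_j) = \sum_{i=0}^m f(b_i)\psi_i(b_j) = \sum_{i=0}^m f(b_i)\delta_{ij} = f(b_j).
\]
Applying $\Pi_m$ a second time now gives
\[
\Pi_m^2 f = \Pi_m(\Pi_m f) = \sum_{i=0}^m (\Pi_m f)(b_i)\,\psi_i = \sum_{i=0}^m f(b_i)\,\psi_i = \Pi_m f,
\]
which is the desired identity.

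There is no real obstacle here; the argument is essentially bookkeeping. The only point that requires a small amount of care is the evaluation $\phi_k(b_j) = \mathbf{1}_{\{j\geq k\}}$ at the endpoints of the partition, which uses the uniformity of $\eta$ (or, more generally, the defining property $z_i = 1/\lambda([b_{i-1},b_i])$ for a non-uniform partition). Once the interpolation property $\psi_i(b_j)=\delta_{ij}$ is in hand, idempotence of $\Pi_m$ is immediate.
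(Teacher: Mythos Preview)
Your proof is correct and follows the same approach as the paper: both establish the interpolation property $\psi_i(b_j)=\delta_{ij}$ and then read off idempotence from the expansion $\Pi_m f=\sum_i f(b_i)\psi_i$. You provide more detail by explicitly computing $\phi_k(b_j)$ to verify the interpolation property, whereas the paper simply states it as an observation.
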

\begin{proof}
Observe that
\begin{equation*}
\psi_i(b_j)=\left\{\begin{array}{cc}
1&\mbox{if $i=j$}\\
0&\mbox{if $i\not=j$}
\end{array}
\right. .
\end{equation*}
For $f\in\mathcal C^{\lip}(I)$,  we have
$$\Pi_m(\Pi_mf)=\sum_{j=0}^{m}\left(\sum_{i=0}^{m}f(b_i)\psi_i(b_j)\right)\psi_j=\sum_{j=0}^{m}f(b_j)\psi_j=\Pi_m.$$
\end{proof}
\begin{remark}
The projection property of $\Pi_m$ is going to be essential for results in Section \ref{sec:KL_applied}
(specifically, in the proof of Lemma \ref{Le7}). \end{remark}
\begin{lemma}\label{Le4}
For $f\in\mathcal C^{\lip}(I)$ we have
\begin{enumerate}
\item $\lip(\Pi_mf)\le\lip(f)$;
\item $|\Pi_mf|\le|f|$;
\item $|\Pi_mf-f|\le2\varepsilon\lip(f)$;
\item $V\Pi_mf\le Vf$.
\end{enumerate}
\end{lemma}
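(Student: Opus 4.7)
The plan is to exploit the fact that on each subinterval $[b_{i-1},b_i]$ of the uniform partition $\eta$, the function $\Pi_m f$ is exactly the linear interpolant of $f$ at the two endpoints. Concretely, for $x\in[b_{i-1},b_i]$ only the two hat functions $\psi_{i-1}$ and $\psi_i$ are nonzero, so
$$\Pi_m f(x) = f(b_{i-1}) + m\bigl(f(b_i)-f(b_{i-1})\bigr)(x-b_{i-1}).$$
Every one of the four claims will be derived from this one formula.

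For (1), the slope of $\Pi_m f$ on $[b_{i-1},b_i]$ has absolute value $\bigl|m(f(b_i)-f(b_{i-1}))\bigr| \le m\cdot\lip(f)\cdot(b_i-b_{i-1}) = \lip(f)$, and since $\Pi_m f$ is continuous and piecewise affine, its global Lipschitz constant is the maximum of these local slopes. For (2), $\Pi_m f(x)$ is a convex combination of $f(b_{i-1})$ and $f(b_i)$, hence $|\Pi_m f(x)| \le \max_{0 \le i \le m}|f(b_i)| \le |f|$.

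For (3), I would split via the triangle inequality through the nearest breakpoint: for $x\in[b_{i-1},b_i]$,
$$|\Pi_m f(x)-f(x)| \le |\Pi_m f(x)-\Pi_m f(b_{i-1})| + |f(b_{i-1})-f(x)|,$$
using that $\Pi_m f(b_{i-1})=f(b_{i-1})$. The first term is bounded by $\lip(\Pi_m f)\cdot\varepsilon \le \varepsilon\lip(f)$ by (1), and the second by $\varepsilon\lip(f)$ directly, giving the factor of $2$. For (4), since $\Pi_m f$ is affine (hence monotone) on each $[b_{i-1},b_i]$, its total variation equals exactly $\sum_{i=1}^m |f(b_i)-f(b_{i-1})|$; this sum is a lower Riemann-type sum for $Vf$ relative to the partition $\eta$, so it is dominated by $Vf$.

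I do not anticipate any real obstacle: each estimate reduces to a one-line observation about linear interpolation on a single subinterval. The only point requiring a little care is (3), where one must insert the breakpoint $b_{i-1}$ to cleanly separate the interpolation error from the Lipschitz oscillation of $f$; once that triangle inequality is in place, (1) closes the argument.
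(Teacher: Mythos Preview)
Your proof is correct and follows essentially the same approach as the paper: both arguments reduce each statement to the local linear-interpolation formula on a single partition cell, compute slopes or convex combinations of endpoint values, and for (3) insert a breakpoint via the triangle inequality together with the interpolation property $\Pi_m f(b_i)=f(b_i)$ and part (1). The only cosmetic difference is that the paper routes (3) through $b_i$ rather than $b_{i-1}$ and phrases the bound on $|\Pi_m f(x)-\Pi_m f(b_i)|$ using the operator norm $|\Pi_m|\le 1$, whereas your use of $\lip(\Pi_m f)\le\lip(f)$ is arguably cleaner.
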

\begin{proof}
For $f\in\mathcal C^{\lip}(I)$.  We have
$$\lip(\Pi_mf)=\max_{i}\frac{|\Pi_mf(b_i)-\Pi_mf(b_{i-1})|}{b_i-b_{i-1}}=\max_{i}\frac{|f(b_i)-f(b_{i-1})|}{b_i-b_{i-1}}\le\lip(f),$$
and
$$|\Pi_mf(x)|=|\sum_{i=0}^mf(b_i)\psi_i(x)|\le|f|\sum_{i=0}^{m}\psi_i(x)=|f|.$$
To prove the third statement, let $x\in[b_{i-1},b_i]$. Then
\begin{equation*}
\begin{split}
|\Pi_mf(x)-f(x)|&=|\Pi_mf(x)-\Pi_mf(b_i)+f(b_i)-f(x)|\\
&\le|\Pi_m||f(x)-f(b_i)|+|f(b_i)-f(x)|\le2|f(b_i)-f(x)|\le 2\varepsilon\lip(f).
\end{split}
\end{equation*}
Finally, we have
$$V\Pi_m f=\sum_{i=0}^{m-1}|f(b_{i+1})-f(b_i)|\le Vf.$$
\end{proof}
We now define a discretized version of $\ml$ by:
$$\ml_m=\Pi_m\ml \Pi_m.$$
Observe that $\ml_m$ is a finite rank operator whose range is contained in the space of continuous,  piecewise linear functions with respect to $\eta$.  On this space, and with 
respect to the basis $\{\psi_i\}$, the action of  $\ml_m$ becomes matrix multiplication\footnote{All our matrices act by multiplication on the right, in keeping with the usual convention in the literature.} by an $(m+1)\times(m+1)$ matrix whose $(ij)^{\text{th}}$ entry is given by
$$m_{ij}:=\ml\psi_{i}(b_{j}).$$
By definition, for all $i$, the functions $\psi_{i}$ are non-negative. Consequently, the matrix corresponding to $\ml_m$ is a non-negative matrix. However, the matrix of $\ml_m$ is not necessarily stochastic. 

\subsection{Markov Discretization}\label{sec:markov}

Another natural choice for constructing piecewise linear approximations was used by 
Ding and Li \cite{DL}. In this case the goal is slightly different, namely, to preserve the Markov
structure of the transfer operator $\ml$. 

Let $\psi_i, i = 0, 1, 2, \dots m$  be the hat functions defined in (\ref{hat}). For $f \in L^1$, 
we set $I_i := [b_{i-1}, b_i]$ and
$$f_i := m \int_{I_i} f \, dx, ~~i= 1, 2, \dots m,$$
the average of $f$ over the associated partition cell. For $f \in L^1$ we set
$$Q_m f := f_1\psi_0 + \sum_{i=1}^{m-1}  \frac{f_i + f_{i+1}}{2}  \psi_i  + f_m \psi_m$$
While $Q_m$ fails to be a projection operator, it retains good stochastic properties\footnote{See \cite{DL} for proofs.} \cite{DL}:
\begin{itemize}
\item $Q_m: L^1 \rightarrow L^1$  is a Markov operator whose range is contained in the class of continuous, piecewise linear functions with respect to the $b_i$.    
\item $V\, Q_mf \leq V\,f $
\end{itemize}
Moreover, $Q_m$ has nice regularity properties when acting of the the space $\mathcal C^{\lip}(I)$
\begin{lemma}\label{Le:new}
For $f\in\mathcal C^{\lip}(I)$, we have
\begin{enumerate}
\item $|Q_m f|\le 2|f|$;
\item $\lip(Q_mf)\le\frac32\lip(f)$;
\item $|Q_mf- f|\le\frac{5}{2m}\lip(f)$.
\end{enumerate} 
\end{lemma}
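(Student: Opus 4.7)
The plan is to exploit three structural facts about the basis $\{\psi_i\}$: (a) the partition of unity property $\sum_{i=0}^m \psi_i \equiv 1$ with $\psi_i \ge 0$, (b) the node identification $\psi_i(b_j) = \delta_{ij}$, so that $Q_mf$ is the piecewise linear interpolant with nodal values $Q_mf(b_0)=f_1$, $Q_mf(b_i)=\frac{f_i+f_{i+1}}{2}$ for $1\le i\le m-1$, and $Q_mf(b_m)=f_m$, and (c) the elementary bound $|f_i| = |m\int_{I_i} f \, d\lambda| \le |f|$ available since $f_i$ is an average.

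For (1), since every coefficient of a $\psi_i$ in the definition of $Q_mf$ is of the form $f_j$ or $\frac{f_j+f_{j+1}}{2}$, each has absolute value bounded by $|f|$, and using the partition of unity we obtain $|Q_mf(x)|\le |f|$, which is in fact stronger than the claimed bound $2|f|$. For (2), the Lipschitz constant of the piecewise linear function $Q_mf$ equals the maximum slope over the pieces $[b_{i-1},b_i]$. On the two boundary pieces the slope is $\tfrac{m}{2}(f_2-f_1)$ or $\tfrac{m}{2}(f_m-f_{m-1})$, and on an interior piece it is $\tfrac{m}{2}(f_{i+1}-f_{i-1})$. The key estimate is
$$|f_{j+1}-f_{j-1}| = \left| m\int_0^{1/m}\bigl[f(b_j+t)-f(b_{j-2}+t)\bigr]\,dt\right| \le \tfrac{2}{m}\lip(f),$$
obtained by a change of variables and the Lipschitz bound on $f$, with the analogous one-cell bound $|f_{j+1}-f_j|\le \tfrac{1}{m}\lip(f)$ on the boundary pieces. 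Plugging in yields $\lip(Q_mf)\le \lip(f)$, which implies the claimed $\tfrac{3}{2}\lip(f)$.

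For (3), I would first bound the error at the nodes. Writing $f_j - f(b_i) = m\int_{I_j}[f(y)-f(b_i)]\,dy$ and using the Lipschitz property gives $|f_i - f(b_i)| \le \tfrac{1}{2m}\lip(f)$ and $|f_{i+1}-f(b_i)|\le \tfrac{1}{2m}\lip(f)$ (the factor $\tfrac{1}{2}$ coming from the fact that $b_i$ is an endpoint of $I_i$ and $I_{i+1}$, so the linear distance integrates to $\tfrac{1}{2m^2}$). Averaging,
$$|Q_mf(b_i)-f(b_i)| \le \tfrac{1}{2m}\lip(f)$$
for every node $b_i$, $0\le i\le m$. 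Then for a general $x\in[b_{i-1},b_i]$, by the triangle inequality,
$$|Q_mf(x)-f(x)| \le |Q_mf(x)-Q_mf(b_{i-1})| + |Q_mf(b_{i-1})-f(b_{i-1})| + |f(b_{i-1})-f(x)|,$$
and using part (2) (in the sharp form $\lip(Q_mf)\le \lip(f)$), the nodal estimate, and the Lipschitz bound on $f$, the three terms are controlled by $\tfrac{1}{m}\lip(f)$, $\tfrac{1}{2m}\lip(f)$, and $\tfrac{1}{m}\lip(f)$ respectively, summing to $\tfrac{5}{2m}\lip(f)$.

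There is no real obstacle here; the only mildly subtle point is the nodal estimate in (3), where one must recognize that for the interior nodes $b_i$ (which are endpoints of the two adjacent integration cells $I_i$ and $I_{i+1}$) the integrand $|y-b_i|$ vanishes at the endpoint, yielding the better constant $\tfrac{1}{2m}$ rather than $\tfrac{1}{m}$; this is exactly what makes the overall constant land at $\tfrac{5}{2}$ instead of something larger.
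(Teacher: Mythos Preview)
Your proof is correct and in fact yields sharper constants than the paper in parts (1) and (2), while arriving at the same $\tfrac{5}{2m}$ in part (3) by a different decomposition.

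For (1), the paper bounds the two nonzero hat contributions on a cell $I_i$ separately to obtain $2|f|$; you use the partition of unity $\sum_i \psi_i \equiv 1$ directly to get $|Q_m f|\le |f|$, which is the sharp inequality. For (2), the paper represents $f_{i\pm1}=f(z_{i\pm1})$ for mean-value points $z_{i\pm1}\in I_{i\pm1}$ and estimates $|z_{i+1}-z_{i-1}|\le \tfrac{3}{m}$, giving $\tfrac{3}{2}\lip(f)$; your translation-in-the-integral argument $f_{j+1}-f_{j-1}=m\int_0^{1/m}[f(b_j+t)-f(b_{j-2}+t)]\,dt$ captures the cancellation and gives the sharper bound $\lip(Q_mf)\le \lip(f)$. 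For (3), the paper works pointwise on a cell $I_i$, writing $Q_mf(x)-f(x)$ as a convex combination and splitting into $\tfrac12|f_i-f(x)|+\tfrac12|f_{i-1}-f(x)|+\tfrac12|f_{i+1}-f(x)|$, which yields $\tfrac12(\tfrac{1}{m}+\tfrac{2}{m}+\tfrac{2}{m})\lip(f)=\tfrac{5}{2m}\lip(f)$. You instead control the error at the nodes first (the observation that $b_i$ is an endpoint of both $I_i$ and $I_{i+1}$, so $|f_{i}-f(b_i)|,|f_{i+1}-f(b_i)|\le \tfrac{1}{2m}\lip(f)$, is the key point) and then propagate via the Lipschitz bounds on $Q_mf$ and $f$. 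Your route requires the sharp form of (2) to close, which you have; the paper's direct decomposition does not need (2) at all. Both arguments handle the boundary cells $I_1$, $I_m$ without modification, although neither you nor the paper spells this out explicitly.
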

\begin{proof}
Let $f\in\mathcal C^{\lip}(I)$ and $x\in I_i$, $i=2,\dots{m-1}$. Then, 
$$Q_mf(x)=|\frac{f_{i-1}+f_i}{2}\psi_{i-1}(x)+\frac{f_{i}+f_{i+1}}{2}\psi_{i}(x)|\le 2|f|.$$
For the special cases when $x\in I_1$ or in $I_m$, the proof is similar. For the second statement we have
\begin{equation*}
\begin{split}
\lip (Q_mf)=m\max_{i}| \frac{f_i + f_{i+1}}{2}- \frac{f_{i-1} + f_{i}}{2}|&=\frac m2\max_{i}| f_{i+1}- f_{i-1}|\\
&=\frac m2\max_{i}|f(z_{i+1})-f(z_{i-1})|,
\end{split}
\end{equation*}
for some $z_{i+1}\in I_{i+1}$ and $z_{i-1}\in I_{i-1}$. Therefore,
$$\lip (Q_mf)=\frac m2\lip(f)\max_{i}|z_{i+1}-z_{i-1}|\le\frac32 \lip(f).$$
To prove the last statement, we let $x\in I_i$ and observe that
\begin{equation*}
\begin{split}
|Q_mf(x)-f(x)|&=|\frac{f_{i-1}+f_i}{2}\psi_{i-1}(x)+\frac{f_{i}+f_{i+1}}{2}\psi_{i}(x)-f(x)|\\
&=|\frac{f_i}{2}+ \frac{f_{i-1}}{2}\psi_{i-1}(x)+\frac{f_{i+1}}{2}\psi_{i}(x)-f(x)|\\
&=|\frac{f_i-f(x)}{2}+ \frac{f_{i-1}}{2}\psi_{i-1}(x)+\frac{f_{i+1}}{2}\psi_{i}(x)-(\psi_{i-1}(x)+\psi_{i}(x))\frac{f(x)}{2}|\\
&\le|\frac{f_i-f(x)}{2}|+|\frac{f_{i-1}-f(x)}{2}|+|\frac{f_{i+1}-f(x)}{2}|\\
&\le\frac{1}{2}[\frac1m\lip(f)+\frac2m\lip(f)+\frac2m\lip(f)]=\frac{5}{2m}\lip(f).
\end{split}
\end{equation*}
\end{proof}
We now define a second piecewise linear Markov discretization of $\ml$ by
$$\mathbb{P}_m:=Q_m\circ \ml.$$
Notice that $\mathbb{P}_m$ is a finite-rank Markov operator whose range is 
contained in the space of continuous, piecewise linear functions with respect to $\eta$. 
The matrix representation of $\mathbb{P}_m$ restricted to this finite-dimensional space and
with respect to the basis $\{\psi_i\}$ is a (row) stochastic matrix, with entries 
$$p_{ij}:=m \int_{I_j} \ml\psi_{i}\geq 0.$$
\section{Keller-Liverani's theorem}\label{Perturbation}
In this section we present a version\footnote{In \cite{Li} another version of \cite{KL} was designed for rigorous computations in the framework of $(BV, ||\cdot||)$ and $(L^1,|\cdot|)$, where the $|\cdot|$-norm of both operators in the setting of \cite{Li} is smaller than 1.} of the perturbation result of \cite{KL} which is designed for rigorous computations. All the constants involved in the statements below are crucial in our work. 
\subsection{Notation}
Let $(\mathbb B, ||\cdot||)$ be a Banach space which is equipped with a second norm $|\cdot |$ such that 
\begin{equation}\label{eq:norm}
|\cdot|\le||\cdot||.
\end{equation}
For any bounded linear operator $P: \mathbb B\to \mathbb B$, consider the set
$$V_{\delta,r}(P)=\{z\in\mathbb C: |z|\le r\text{ or dist}(z,\sigma(P))\le\delta\},$$
where $\sigma(P)$ is the spectrum of $P$ with respect to $(\mathbb B, ||\cdot||)$, and define
$$H_{\delta,r}(P):=\sup\{||(z-P)^{-1}||: z\in\mathbb C\setminus V_{\delta,r}\}<\infty.$$
Further, we define the following operator norm:
$$|||P|||=\underset{||f||\le 1}{\sup}|Pf|.$$
\subsection{Assumptions}
let $P_i:\mathbb B\to \mathbb B$  be two bounded linear operators, $i=1,2$. 
Assume that:
there is a $C_1, M>0$ such that for all $n\in \mathbb N$
\begin{equation}\label{E0} 
|P_i^n|\le C_1M^n;
\end{equation}
and $\exists\,\alpha\in (0,1)$, $\alpha<M$,  and $C_2, C_3>0$ such that
\begin{equation}\label{E1} 
||P_{i}^nf||\le C_2\alpha^n||f||+C_3M^n|f| \hskip .3cm \forall n\in\mathbb N\,\ \forall f\in \mathbb B,\, i=1,2;
\end{equation}
moreover, if 
\begin{equation}\label{E2}
|z|>\alpha,\text{ then } z \text{ is not is the residual spectrum of } P_i, i=1,2.
\end{equation}
\subsection{Results}
\begin{theorem}\cite{KL}\label{Th1}
Consider two operators $P_i:\mathbb B\to \mathbb B$ which satisfy (\ref{E0}), (\ref{E1}) and (\ref{E2}). For\footnote{In this paper will be interested in those $r\in(\alpha,1)$.} $r\in(\alpha, M)$, let 
$$n_1=\lceil \frac{\ln 2C_2}{\ln r/\alpha}\rceil$$
$$n_2=\lceil\frac{\ln 8C_3C_2(C_2+C_3+2)MH_{\delta,r}(P_1)}{\ln r/\alpha}+n_1\frac{\ln(M/r)}{\ln r/\alpha}\rceil .$$
If
$$|||P_1-P_2|||\le\frac{\left(r/M\right)^{n_1+n_2}}{8C_3(H_{\delta,r}(P_1)C_3+\frac{C_1}{M-r})} :=\varepsilon_1(P_1,r,\delta)$$
then for each $z\in\mathbb C\setminus V_{\delta,r}(P_1)$, we have
$$||(z-P_2)^{-1}f||\le \frac{4(C_2+C_3)}{M-r}\left (\frac{M}{r}\right )^{n_1}||f||+\frac{1}{2\varepsilon_1}||f||_1.$$
Set
$$\gamma=\frac{\ln(r/\alpha)}{\ln(M/\alpha)},$$ 
$$a=\frac{8M(C_2+C_3)^2}{M-r}\left(\frac{M}{r}\right)^{n_1}[2C_2(C_2+C_3)+\frac{C_1}{M-r}]+\frac{2C_1}{M-r},$$

$$b=\frac{8M}{M-r}\left[MC_2(C_2+C_3+2)+C_3\right](C_2+C_3)^2\left(\frac{M}{r}\right)^{n_1}+2C_3,$$
and
\begin{equation*}
\begin{split}
&\varepsilon_2(P_1,r,\delta):=\\
&\left[\frac{1}{4C_3}\left(\frac{r}{M}\right)^{n_1}\left( \frac{1}{H_{\delta,r}(P_1)[C_2(C_2+C_3+2)M+C_3]+2C_2(C_2+C_3)+\frac{C_1}{M-r}}\right)\right]^{\frac{1}{\gamma}}.
\end{split}
\end{equation*}
If
\begin{equation}\label{E3}
|||P_1-P_2|||\le\min\{\varepsilon_1(P_1,r,\delta),\varepsilon_2(P_1,r,\delta)\}:=\varepsilon_0(P_1,r,\delta)
\end{equation}
then for each $z\in\mathbb C\setminus V_{\delta,r}(P_1)$, we have
\begin{equation}\label{E4}
|||(z-P_2)^{-1}-(z-P_{1})^{-1}|||\le |||P_1-P_2|||^{\gamma}(a||(z-P_1)^{-1}||+b||(z-P_1)^{-1}||^2). 
\end{equation}
\end{theorem}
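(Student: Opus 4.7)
The statement is a quantitatively tracked restatement of the Keller-Liverani perturbation theorem \cite{KL}, so the proof must follow the KL strategy while carrying every constant explicitly through each estimate. I would organize the argument in four stages.

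Stage one: iterate the Lasota-Yorke estimate (\ref{E1}). The threshold $n_1 = \lceil \ln(2C_2)/\ln(r/\alpha)\rceil$ is chosen precisely so that $C_2 \alpha^{n_1} \le \tfrac{1}{2} r^{n_1}$, yielding the uniform Doeblin-Fortet bound
$$\|P_i^{n_1} f\| \le \tfrac{1}{2} r^{n_1} \|f\| + C_3 M^{n_1} |f|,\quad i=1,2.$$
Combined with the weak bound (\ref{E0}) and the residual-spectrum hypothesis (\ref{E2}), the standard Hennion argument shows that the spectrum of each $P_i$ outside $\{|z|\le\alpha\}$ is purely discrete (eigenvalues of finite multiplicity), which justifies treating $H_{\delta,r}(P_1)$ as a legitimate upper bound on the $\|\cdot\|$-resolvent on the complement of $V_{\delta,r}(P_1)$.

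Stage two: the core device is the telescoping identity
$$P_2^n - P_1^n = \sum_{k=0}^{n-1} P_2^k (P_2-P_1) P_1^{n-1-k},$$
estimated in a mixed fashion. The inner factor $P_2-P_1$ is applied to $P_1^{n-1-k}f$, whose strong norm is controlled by (\ref{E1}); the output, measured in $|\cdot|$, is small by the definition of $|||\cdot|||$; and the outer $P_2^k$ is bounded via (\ref{E0}). Summing, and using $\sum_k M^k \alpha^{n-1-k} = (M^n-\alpha^n)/(M-\alpha)$, yields
$$|P_2^n f - P_1^n f| \le |||P_1-P_2|||\Bigl(\tfrac{C_1 C_2 (M^n-\alpha^n)}{M-\alpha}\|f\| + n C_1 C_3 M^{n-1}|f|\Bigr).$$
This is the conversion mechanism: smallness in $|||\cdot|||$ produces weak-norm control on perturbed iterates, at the cost of a factor $M^n$.

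Stage three: apply the iterated resolvent identity
$$R_2 = R_1 + R_1(P_2-P_1)R_1 + R_1(P_2-P_1)R_2(P_2-P_1)R_1,$$
where $R_i = (z-P_i)^{-1}$. Writing each $R_i$ against a finite Dunford-type expansion in iterates of $P_i$ (truncated at depth $n_1$ and $n_2$) and inserting the mixed-norm estimate from stage two, one obtains an a priori bound on $\|R_2\|$ controlled by $\|R_1\| \le H_{\delta,r}(P_1)$, provided the perturbation is small enough; this is where $\varepsilon_1$ enters and gives the first inequality of the theorem. The linear-plus-quadratic structure $a\|R_1\|+b\|R_1\|^2$ in (\ref{E4}) is the direct trace of the two and three resolvent factors appearing in the identity above. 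The interpolation exponent $\gamma = \ln(r/\alpha)/\ln(M/\alpha)$ arises by optimizing the free iteration depth $n$ (equivalently, $n_2$) so as to balance the growing factor $M^n|||P_1-P_2|||$ against the geometric decay $(\alpha/r)^n$ coming from the LY inequality; taking $n$ of order $\log(1/|||P_1-P_2|||)$ produces the fractional power $|||P_1-P_2|||^\gamma$.

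Stage four is purely arithmetic bookkeeping, and it is where I expect the main difficulty. Every constant that appears -- $n_1$, $n_2$, $\varepsilon_1$, $\varepsilon_2$, $a$, $b$ -- is pinned down by the closing of a specific chain of inequalities from stages one through three, and one must verify the chain remains self-consistent. The delicate point is that the quadratic-remainder term in stage three contains the unknown $\|R_2\|$, which must first be controlled using only $\varepsilon_1$-smallness, and only then fed back into the quadratic estimate via the stronger $\varepsilon_2$-smallness. The asymmetric threshold $\varepsilon_0 = \min\{\varepsilon_1,\varepsilon_2\}$ and the appearance of $\gamma$ rather than $1$ in the right-hand side of (\ref{E4}) are the precise consequence of this two-step bootstrap.
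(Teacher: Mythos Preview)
The paper does not provide its own proof of this theorem: it is stated with the citation \cite{KL} and no argument is given, since the authors are simply quoting (a computationally explicit form of) the Keller--Liverani perturbation result for later use. There is therefore nothing in the paper to compare your proposal against.

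That said, your four-stage outline is a fair summary of the original Keller--Liverani strategy: the choice of $n_1$ to absorb $C_2\alpha^{n_1}$ into $\tfrac12 r^{n_1}$, the telescoping bound on $P_2^n-P_1^n$ in the mixed norm, the second resolvent identity yielding the linear-plus-quadratic structure in $\|R_1\|$, and the optimization in the iteration depth that produces the exponent $\gamma$. One small slip: your iterated resolvent identity in stage three should read $R_2 = R_1 + R_1(P_2-P_1)R_1 + R_1(P_2-P_1)R_1(P_2-P_1)R_2$ (or the symmetric version with $R_2$ on the left), not with $R_2$ sandwiched in the middle; this does not affect the shape of the final estimate but would matter if you actually carried out stage four. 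If your goal is a complete proof rather than a sketch, the honest route is to consult \cite{KL} directly, since the bookkeeping required to land on exactly these constants is substantial and not reproduced anywhere in the present paper.
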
 
\begin{corollary}\cite{KL}\label{Co1}
If $|||P_1-P_2|||\le\varepsilon_1(P_1,r,\delta)$ then $\sigma(P_2)\subset V_{\delta,r}(P_1)$. In addition, if 
$|||P_1-P_2|||\le\varepsilon_0(P_1,r,\delta)$, then in each connected component of $V_{\delta,r}(P_1)$ that does not contain $0$ both 
$\sigma(P_1)$ and $\sigma(P_2)$ have the same multiplicity; i.e., the associated spectral projections have the 
same rank.
\end{corollary}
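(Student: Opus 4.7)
The plan is to read both statements off Theorem~\ref{Th1}. For the first assertion, fix any $z \in \mathbb{C} \setminus V_{\delta,r}(P_1)$. The hypothesis $|||P_1 - P_2||| \leq \varepsilon_1(P_1, r, \delta)$ is precisely what Theorem~\ref{Th1} requires in order to furnish the explicit bound on $\|(z - P_2)^{-1} f\|$, and that bound implicitly asserts that $(z - P_2)^{-1}$ exists as a bounded operator on $\mathbb{B}$. Hence $z \notin \sigma(P_2)$, and the contrapositive gives $\sigma(P_2) \subset V_{\delta,r}(P_1)$.

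For the multiplicity statement, fix a connected component $U$ of $V_{\delta,r}(P_1)$ not containing $0$, and choose a rectifiable simple closed curve $\Gamma \subset \mathbb{C} \setminus V_{\delta,r}(P_1)$ enclosing $U$ and no other component. Define the Riesz spectral projections
\begin{equation*}
\Pi_i := \frac{1}{2\pi i} \oint_\Gamma (z - P_i)^{-1} \, dz, \quad i = 1, 2,
\end{equation*}
which by the first assertion are well-defined bounded projections on $\mathbb{B}$. Since each $P_i$ satisfies the Lasota--Yorke inequality (\ref{E1}), Hennion's theorem guarantees that its essential spectral radius is at most $\alpha < r$, so the part of $\sigma(P_i)$ inside $\Gamma$ consists of finitely many eigenvalues of finite multiplicity, and each $E_i := \mathrm{Im}(\Pi_i)$ is finite-dimensional. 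Under the stronger hypothesis $|||P_1 - P_2||| \leq \varepsilon_0$, integrating (\ref{E4}) along $\Gamma$ and using $\|(z - P_1)^{-1}\| \leq H_{\delta,r}(P_1)$ yields
\begin{equation*}
|||\Pi_1 - \Pi_2||| \leq \frac{|\Gamma|}{2\pi} \, |||P_1 - P_2|||^{\gamma} \bigl(a\, H_{\delta,r}(P_1) + b\, H_{\delta,r}(P_1)^2\bigr).
\end{equation*}

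The ranks are then forced to agree by the standard finite-dimensional comparison: on $E_1$ the norms $\|\cdot\|$ and $|\cdot|$ are equivalent, so fix $c > 0$ with $c\|f\| \leq |f|$ for all $f \in E_1$. For $f \in E_1$ with $\|f\| = 1$ we have $\Pi_1 f = f$, hence $|\Pi_2 f - f| = |(\Pi_2 - \Pi_1)f| \leq |||\Pi_1 - \Pi_2|||$; once this is strictly less than $c$, $\Pi_2 f \neq 0$, so $\Pi_2|_{E_1}$ is injective and $\dim E_1 \leq \dim E_2$. The reverse inequality follows by swapping the roles of $P_1$ and $P_2$. I expect the main obstacle to be this last step: the specific formula for $\varepsilon_0(P_1,r,\delta)$, and in particular the exponent $1/\gamma$ hidden inside $\varepsilon_2$, is tuned precisely so that the above bound on $|||\Pi_1 - \Pi_2|||$ falls below the implicit norm-equivalence constant $c$. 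Making that inequality explicit is the crux of the argument, while the rest is routine Riesz functional calculus.
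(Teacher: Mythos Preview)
The paper does not supply its own proof of this corollary: it is quoted verbatim from \cite{KL} and used as a black box, so there is nothing in the paper to compare your argument against. That said, let me comment on your attempt as a stand-alone proof.

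Your treatment of the first assertion is fine, and the Riesz-projection setup for the second is standard and correct. The gap is exactly where you flag it, but it is more serious than you indicate. Two issues:

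\emph{First}, the ``swap the roles of $P_1$ and $P_2$'' move does not work directly. The quantities $\varepsilon_0,\varepsilon_1,\varepsilon_2$ and the set $V_{\delta,r}$ are all built from $H_{\delta,r}(P_1)$ and $\sigma(P_1)$; after swapping you would need $|||P_1-P_2|||\le \varepsilon_0(P_2,r,\delta)$ and a curve avoiding $V_{\delta,r}(P_2)$, neither of which is given. The argument in \cite{KL} is not symmetric in this way.

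\emph{Second}, the norm-equivalence constant $c$ on $E_1$ that you invoke is not an unknown constant that $\varepsilon_2$ is mysteriously ``tuned'' to hit. The point, which is the actual content of the Keller--Liverani argument, is that $c$ is \emph{explicitly controlled by the Lasota--Yorke data} $C_2,C_3,\alpha,M,r$: any $f$ in the range of $\Pi_1$ is a sum of generalized eigenvectors for eigenvalues of modulus $>r>\alpha$, and iterating (\ref{E1}) on such vectors yields $\|f\|\le C(r,\alpha,C_2,C_3,M)\,|f|$ with a computable $C$. It is this explicit $C$ that enters the definition of $\varepsilon_2$, and once you have it the injectivity argument (in both directions, using the bound on $\|(z-P_2)^{-1}\|$ from Theorem~\ref{Th1} to control $\Pi_2$ in the strong norm) goes through without any appeal to symmetry. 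As written, your proof omits precisely this step and so does not close.
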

\section{Computing the decay rate for $\ml$}\label{sec:KL_applied}
\subsection{Applying Keller-Liverani's theorem to $\ml$ and $\ml_m$}
\begin{lemma}\label{Le5}
Let $\ml_{*}$ denote either $\ml$ or $\ml_m$ and let $f\in\mathcal C^{\lip}(I)$. We have
\begin{enumerate}
\item for all $n\ge 1$, $|\ml^n_{*}|\le M^n$, where $M=B_0+1$;
\item for all $n\ge 1$, $||\ml^n_{*}f||\le\alpha^{n}||f||+C_3M^n|f|,$
where $C_3=\frac{B_1}{M(1-\alpha)}+1$.
\item $|||(\ml-\ml_m)||| \le\Gamma\varepsilon$, where $\Gamma=2\cdot\max\{\alpha+M,B_1\}$. 
\end{enumerate}
\end{lemma}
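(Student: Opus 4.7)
The plan is to reduce all three parts to the Lasota--Yorke inequality of Lemma \ref{Le2}, the uniform $L^\infty$ bound of Lemma \ref{Le1}, and the regularity properties of $\Pi_m$ collected in Lemma \ref{Le4}. The single structural observation I would use to treat $\ml$ and $\ml_m$ in parallel is the telescoping identity $\ml_m^n=(\Pi_m\ml)^n\Pi_m$, which follows from $\Pi_m^2=\Pi_m$ (Lemma \ref{Le3}).

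For (1), Lemma \ref{Le1} gives $|\ml^n|\le M$, which is at most $M^n$ since $M\ge 1$. For $\ml_m$, I would iterate the estimate $|\Pi_m\ml g|\le|\ml g|\le M|g|$ (from Lemma \ref{Le4}(2) and Lemma \ref{Le1}) along the factorization above, picking up a factor of $M$ per step, so that $|\ml_m^nf|\le M^n|\Pi_mf|\le M^n|f|$.

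For (2), set $g_j:=\ml^j f$ in the case of $\ml$ and $g_j:=(\Pi_m\ml)^j\Pi_mf$ in the case of $\ml_m$. In both cases Lemma \ref{Le2} together with $\lip(\Pi_m\cdot)\le\lip(\cdot)$ from Lemma \ref{Le4}(1) yields the recursion $\lip(g_{j+1})\le\alpha\lip(g_j)+B_1|g_j|$. Unrolling this and plugging in the bound $|g_j|\le M^j|g_0|$ from part (1) produces the geometric sum
\begin{equation*}
\sum_{j=0}^{n-1}\alpha^{n-1-j}M^j=\frac{M^n-\alpha^n}{M-\alpha},
\end{equation*}
after which adding $|g_n|\le M^n|g_0|$ to pass from $\lip$ to $||\cdot||$, and using $M-\alpha\ge M(1-\alpha)$ (valid because $M\ge 1$) to replace the denominator, recovers exactly the constant $C_3=B_1/(M(1-\alpha))+1$. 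Finally, Lemma \ref{Le4}(1),(2) bound $\lip(g_0)\le\lip(f)$ and $|g_0|\le|f|$ in the case of $\ml_m$.

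For (3), I would use the add-and-subtract decomposition
\begin{equation*}
\ml f-\ml_m f=(I-\Pi_m)\ml f+\Pi_m\ml(I-\Pi_m)f.
\end{equation*}
The first summand is bounded in $|\cdot|$ by $2\varepsilon\lip(\ml f)\le 2\varepsilon(\alpha\lip(f)+B_1|f|)$ via Lemma \ref{Le4}(3) and Lemma \ref{Le2}; the second by $M\cdot 2\varepsilon\lip(f)$ via Lemma \ref{Le4}(2), Lemma \ref{Le1}, and Lemma \ref{Le4}(3). Collecting terms yields $2\varepsilon\left[(\alpha+M)\lip(f)+B_1|f|\right]$, and if $||f||=\lip(f)+|f|\le 1$ then both $\lip(f)$ and $|f|$ lie in $[0,1]$, so the right-hand side is at most $2\varepsilon\max\{\alpha+M,B_1\}=\Gamma\varepsilon$.

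The only real subtlety is in (2): I must use the coarser bound $|\ml_*^j f|\le M^j|f|$ rather than the uniform-in-$j$ bound $|\ml^j f|\le M|f|$ available for $\ml$ alone, because the claim's right-hand side with the factor $M^n$ is not directly implied by Corollary \ref{Cor0} for small $n$ when $M>1$. After that, the transition $M-\alpha\ge M(1-\alpha)$ is what produces exactly the stated constant $C_3$; the rest is routine bookkeeping.
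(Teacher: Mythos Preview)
Your proposal is correct and follows essentially the same route as the paper: the same one-step Lasota--Yorke bound $\lip(\ml_m f)\le\alpha\lip(f)+B_1|f|$ is derived from Lemmas \ref{Le2} and \ref{Le4} and then iterated with the $L^\infty$ bound from part (1), and for part (3) you use the same add-and-subtract decomposition $(I-\Pi_m)\ml f+\Pi_m\ml(I-\Pi_m)f$ as the paper. The only cosmetic difference is that you compute the geometric sum $\sum_{j=0}^{n-1}\alpha^{n-1-j}M^j$ exactly and then invoke $M-\alpha\ge M(1-\alpha)$, whereas the paper bounds it directly by $M^{n-1}\sum_j\alpha^j$; both yield the same constant $C_3$.
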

\begin{proof}
The inequalities in the first two statements for $\ml$ are dominated by those of $\ml_m$. Therefore we prove them only for $\ml_{*}=\ml_m$. For $|f|\le1$, using Lemmas \ref{Le1} and \ref{Le4}, we have
$$|\ml_mf|=|\Pi_m\ml \Pi_mf|\le|\ml||f|\le M.$$
This implies the first statement of the lemma. To prove the second statement, we observe that
\begin{equation}\label{LY:Lip}
\begin{split}
\lip(\ml_mf)&=\lip(\Pi_m\ml \Pi_mf)\le\lip(\ml \Pi_mf)\\
&\le\alpha\lip(\Pi_mf)+B_1|\Pi_mf|\le\alpha\lip(f)+B_1|f|.
\end{split}
\end{equation}
Consequently, 
$$\lip(\ml_m^nf)\le\alpha^n\lip(f)+(1+\alpha+\cdots+\alpha^{n-1})M^{n-1}B_1|f|$$
and, for all $n\ge 1$,
$$||\ml_m^nf||\le\alpha^n||f||+C_3M^n|f|.$$
Finally, using Lemma \ref{Le4} and inequality (\ref{LY:Lip}), we obtain 
\begin{equation*}
\begin{split}
 |(\ml-\ml_m)f|&\le| (\ml-\Pi_m\ml)f|+|(\Pi_m\ml-\Pi_m\ml \Pi_m)f|\\
&\le2\varepsilon\lip(\ml f)+2\varepsilon M\lip(f)\le2\varepsilon[(\alpha+M)\lip(f)+B_1|f|] \le \Gamma\varepsilon||f||.
\end{split}
\end{equation*}
\end{proof}
\subsection{$\ml$ as a perturbation of $\ml_m$}
All the constants arising in Theorem \ref{Th1} are (in principle) computable for the
matrix representation of the finite-dimensional operator $\ml_m$, including the number $H_{\delta,r}(\ml_m)$. Thus, using the idea of \cite{Li}, we are going to apply Theorem \ref{Th1} with $\ml_m$ as $P_1$ and $\ml$ as the perturbation $P_2$. This entails some \emph{a priori} estimates.
\begin{lemma}\label{Le6}
Given $\ml$, $\delta>0$ and $r\in(\alpha,1)$, there exists $\varepsilon_3>0$ such that for each $\eta$ with
$0<\text{mesh}(\eta)\le\varepsilon_3$, we have
\begin{equation}\label{E5}
\text{mesh}(\eta)\le\Gamma^{-1}\varepsilon_0(\ml_m,r,\delta),
\end{equation}
and
\begin{equation}\label{E6}
|||\ml_m-\ml|||\le\varepsilon_0(P_{\eta},r,\delta).
\end{equation}
\end{lemma}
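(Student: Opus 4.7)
The key observation is that the two inequalities are tightly coupled: by Lemma \ref{Le5}(3), $|||\ml - \ml_m||| \leq \Gamma \cdot \text{mesh}(\eta)$, so (\ref{E6}) follows immediately from (\ref{E5}). The entire task thus reduces to exhibiting a uniform positive lower bound for $\varepsilon_0(\ml_m, r, \delta)$ as $\text{mesh}(\eta) \to 0$; once this is in place, any $\varepsilon_3$ smaller than $\Gamma^{-1}$ times that lower bound works.

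Inspecting the formulas of Theorem \ref{Th1}, and noting that the constants $C_1, C_2, C_3, M, \alpha$ are uniform in $m$ by Lemma \ref{Le5}(1)--(2), the only $m$-dependent input into $\varepsilon_0(\ml_m, r, \delta)$ is the resolvent quantity $H_{\delta, r}(\ml_m)$. So it suffices to produce a fixed upper bound on $H_{\delta, r}(\ml_m)$ valid for all sufficiently fine $\eta$. Following the idea of \cite{Li}, I would invoke Theorem \ref{Th1} first in the \emph{opposite} direction, with $P_1 = \ml$ and $P_2 = \ml_m$, using auxiliary parameters $r' \in (\alpha, r)$ and $\delta' \in (0, \min\{\delta/2, 1-r'\})$. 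Since $\varepsilon_0(\ml, r', \delta')$ depends only on $\ml$, Lemma \ref{Le5}(3) ensures that for mesh small enough the hypothesis (\ref{E3}) of Theorem \ref{Th1} is satisfied, and its conclusion delivers a constant $K = K(\ml, r', \delta')$ with $\|(z - \ml_m)^{-1}\| \leq K$ for every $z \notin V_{\delta', r'}(\ml)$.

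The remaining, and in my view the main, obstacle is the set-theoretic inclusion $V_{\delta', r'}(\ml) \subset V_{\delta, r}(\ml_m)$ for all fine enough mesh; the contrapositive is precisely what is needed to transfer the KL resolvent bound into a bound on $H_{\delta, r}(\ml_m)$. Points with $|z| \leq r'$ are trivially in $\{|z| \leq r\}$. For $z$ with $|z| > r$ and $\text{dist}(z, \sigma(\ml)) \leq \delta'$, I would exploit the peripheral spectral picture of Remark \ref{rem:simple_spectral_pic}: the only point of $\sigma(\ml)$ outside the disk of radius $\alpha$ (let alone of radius $r$) is the simple eigenvalue $1$, so $|z - 1| \leq \delta'$. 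The first application of Theorem \ref{Th1} combined with Corollary \ref{Co1} then produces a (unique, simple) eigenvalue $\lambda_m$ of $\ml_m$ in the connected component of $V_{\delta', r'}(\ml)$ containing $1$, which for $\delta' < 1 - r'$ is just $\{|w - 1| \leq \delta'\}$; hence $|\lambda_m - 1| \leq \delta'$ and, by the triangle inequality, $\text{dist}(z, \sigma(\ml_m)) \leq 2\delta' \leq \delta$, so $z \in V_{\delta, r}(\ml_m)$.

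With the inclusion established, $H_{\delta, r}(\ml_m) \leq K$ uniformly, and then unwinding the expressions for $n_1, n_2, \varepsilon_1, \varepsilon_2$ yields a fixed positive constant $\varepsilon_\star$ with $\varepsilon_0(\ml_m, r, \delta) \geq \varepsilon_\star$ for all sufficiently fine $\eta$. Taking $\varepsilon_3$ to be the minimum of $\Gamma^{-1} \varepsilon_0(\ml, r', \delta')$ (needed for the first application of Theorem \ref{Th1} to be valid) and $\Gamma^{-1} \varepsilon_\star$ delivers both (\ref{E5}) and (\ref{E6}) simultaneously for every $\eta$ with $\text{mesh}(\eta) \leq \varepsilon_3$.
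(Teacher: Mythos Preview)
Your argument is correct and follows exactly the route the paper takes, namely deferring to Lemma~4.2 of \cite{Li}: apply Theorem~\ref{Th1} first with $P_1=\ml$, $P_2=\ml_m$ at auxiliary parameters $(r',\delta')$ to get a uniform resolvent bound, then transfer it to $H_{\delta,r}(\ml_m)$ via the inclusion $V_{\delta',r'}(\ml)\subset V_{\delta,r}(\ml_m)$, and finally read off a uniform lower bound on $\varepsilon_0(\ml_m,r,\delta)$. One small technical omission: in your verification of the inclusion for $|z|>r$ with $\mathrm{dist}(z,\sigma(\ml))\le\delta'$, you need the additional constraint $\delta' < r-\alpha$ to rule out the possibility that the nearest spectral point of $\ml$ to $z$ lies in the essential disk $\{|w|\le\alpha\}$ rather than at $1$; with that added to your list of conditions on $\delta'$, the argument is complete.
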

\begin{proof}
See Lemma 4.2 of \cite{Li}\footnote{Although the norms used in \cite{Li} are $BV$ and $L^1$, the proof of his Lemma 4.2 is valid in the general setting of \cite{KL} and does not really depend on the norms involved.}.
\end{proof}
In the next lemma we provide a computable upper bound on $H_{\delta,r}(\ml_m)$. 
\begin{lemma}\label{Le7}
For $f\in (\mathcal C, ||\cdot||)$ with $||f||=1$ and 
$z\in\mathbb C\setminus V_{\delta,r}(\ml_m)$ we have:
\begin{enumerate}
\item $||(z-\ml_m)^{-1}f||\le 
(\frac{B_1}{r-\alpha}+1)|(z-\ml_m)^{-1}\Pi_mf|+\frac{1}{r-\alpha}+\frac{2}{r}$;
\item $|(z-\ml_m)^{-1}\Pi_mf|\le ||(z-\ml_m)^{-1}f||+\frac{2}{r}$.
\end{enumerate}
\end{lemma}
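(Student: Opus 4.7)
The plan is to set $g := (z - \ml_m)^{-1}f$ and $G := (z - \ml_m)^{-1}\Pi_m f$, and to exploit the projection identity $\Pi_m^2 = \Pi_m$ (Lemma~\ref{Le3}) together with the structural identities $\ml_m \Pi_m = \Pi_m \ml_m = \ml_m$, which are immediate from $\ml_m = \Pi_m\ml\Pi_m$. Applying $\Pi_m$ to the resolvent equation $(z-\ml_m)g = f$ gives $(z-\ml_m)\Pi_m g = \Pi_m f$, so $\Pi_m g = G$; projecting onto the complement, and using that $\ml_m g$ lies in the range of $\Pi_m$, yields $(I-\Pi_m)g = \frac{1}{z}(I-\Pi_m)f$. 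These combine into the clean decomposition
\begin{equation*}
g \;=\; G + \frac{1}{z}(I-\Pi_m)f.
\end{equation*}
By Lemma~\ref{Le4}, $|(I-\Pi_m)f| \le 2|f|$ and $\lip((I-\Pi_m)f) \le 2\lip(f)$; since $|z| > r$, this gives $||g - G|| \le \frac{2}{r}||f|| = \frac{2}{r}$. In particular $|G| \le |g| + \frac{2}{r} \le ||g|| + \frac{2}{r}$, which is part (2).

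For part (1), I would apply a Lasota-Yorke estimate to $G$ rather than to $g$ (applying it directly to $g$ would be circular). Rewriting the resolvent identity as $G = \frac{1}{z}(\Pi_m f + \ml_m G)$ and using the inequality $\lip(\ml_m h) \le \alpha\lip(h) + B_1|h|$ that was already established in (\ref{LY:Lip}) during the proof of Lemma~\ref{Le5}, together with $\lip(\Pi_m f) \le \lip(f)$ from Lemma~\ref{Le4}, one obtains
\begin{equation*}
|z|\lip(G) \;\le\; \lip(f) + \alpha\lip(G) + B_1|G|.
\end{equation*}
Since $|z| > r > \alpha$, this rearranges to $\lip(G) \le \frac{\lip(f)}{r-\alpha} + \frac{B_1}{r-\alpha}|G|$, and adding $|G|$ to both sides gives
\begin{equation*}
||G|| \;\le\; \frac{\lip(f)}{r-\alpha} + \Bigl(\frac{B_1}{r-\alpha} + 1\Bigr)|G|.
\end{equation*}
Combining this with $||g|| \le ||G|| + \frac{2}{r}$ from the first paragraph and $\lip(f) \le ||f|| = 1$ yields the stated bound for part (1).

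The main conceptual point is the range/kernel decomposition induced by $\Pi_m$: it isolates a finite-dimensional piece $G$ on which a genuine Lasota-Yorke closure is possible, while the orthogonal piece is controlled trivially via the factor $1/z$. The one subtlety I expect to have to watch will be precisely matching the advertised constants $\frac{1}{r-\alpha}$ and $\frac{2}{r}$: this uses the normalization $||f||=1$ exactly once (at the last step) and relies on the fact that $\lip(\ml_m g)$ genuinely admits a useful upper bound in terms of $\lip(G)$ and $|G|$, not just in terms of $\lip(g)$ and $|g|$, thanks to $\ml_m g = \ml_m \Pi_m g = \ml_m G$.
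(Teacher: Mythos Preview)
Your proof is correct and follows essentially the same approach as the paper. The paper obtains the same decomposition $g=G+\tfrac{1}{z}(I-\Pi_m)f$ by applying the resolvent identity $(z-\ml_m)^{-1}=z^{-1}(z-\ml_m)^{-1}\ml_m+z^{-1}$ to $f-\Pi_mf$ and using $\ml_m(f-\Pi_mf)=0$, then rewrites the resolvent equation for $h=G$ and invokes the Lasota--Yorke bound (\ref{LY:Lip}) exactly as you do; the resulting constants coincide.
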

\begin{proof}
We have
$$(z-\ml_m)^{-1}=z^{-1}(z-\ml_m)^{-1}\ml_m+z^{-1}\cdot{\bf 1}.$$
Then using the fact that $\Pi_m$ is a projection and statements (1) and (2) of Lemma \ref{Le3}, we obtain
\begin{equation}\label{st0}
\begin{split}
||(z-\ml_m)^{-1}(f-\Pi_mf)||&=||(z^{-1}(z-\ml_m)^{-1}\ml_m+z^{-1}\cdot{\bf 1})(f-\Pi_mf)||\\
&=||\frac{f}{z}-\frac{\Pi_m f}{z}||\le\frac{2}{|z|}||f||\le\frac{2}{r}.
\end{split}
\end{equation}
Now, write $(z-\ml_m)^{-1}\Pi_mf=h$, then $h=\frac{1}{z}(\ml_mh+\Pi_mf)$. Therefore, by inequality (\ref{LY:Lip}), we have
\begin{equation*}
\lip(h)\le\frac{1}{|z|}(\lip(\ml_mh)+1)\le\frac{1}{r}(\alpha \lip(h)+B_1||h||_1+1).
\end{equation*} 
Hence,
\begin{equation}\label{st1}
||h||\le(\frac{B_1}{r-\alpha}+1)|h|+\frac{1}{r-\alpha}.
\end{equation}
and by (\ref{st0}) and (\ref{st1}) the first part of the lemma follows. For the proof of the second statement, by (\ref{st0}), we have
\begin{equation}
\begin{split}
|(z-\ml_m)^{-1}\Pi_m f|&\le ||(z-\ml_m)^{-1}\Pi_m f||\le ||(z-\ml_m)^{-1}f||+\frac{2}{r}.
\end{split}
\end{equation}
\end{proof}
Now we define
$$H^*_{\delta,r}(\ml_m):=\sup\{(\frac{B_1}{r-\alpha}+1)|(z-\ml_m)^{-1}\Pi_m f|+\frac{1}{r-\alpha}+\frac{2}{r}:
 \| v\| _{1}=1, z\in\mathbb C\setminus V_{\delta, r}(\ml_m)\},$$
 $$n_2^*:=\lceil\frac{\ln 8C_3C_2(C_2+C_3+2)MH^*_{\delta,r}(\ml_m)}{\ln r/\alpha}+n_1\frac{\ln(M/r)}{\ln r/\alpha}\rceil$$ 
$$\varepsilon_1(P_1,r,\delta):= \frac{\left(r/M\right)^{n_1+n^*_2}}{8C_3(H^*_{\delta,r}(\ml_m)C_3+\frac{C_1}{M-r})},$$
\begin{equation*}
\begin{split}
&\varepsilon^*_2(\ml_m,r,\delta):=\\
&\left[\frac{1}{4C_3}\left(\frac{r}{M}\right)^{n_1}\left( \frac{1}{H^*_{\delta,r}(\ml_m)[C_2(C_2+C_3+2)M+C_3]+2C_2(C_2+C_3)+\frac{C_1}{M-r}}\right)\right]^{\frac{1}{\gamma}},
\end{split}
\end{equation*}
and
\begin{equation*}
\varepsilon_0^*(\ml_m,\delta,r):=\min\{\varepsilon^*_1(\ml_m,r,\delta), \varepsilon^*_2(\ml_m,r,\delta)\}
\end{equation*}
Note that $H^*_{\delta,r}(\ml_m)$ provides a computable upper bound on $H_{\delta,r}(\ml_m)$, and consequently $\varepsilon_0^*(\ml_m,\delta,r)$ provides a computable lower bound on $\varepsilon_0(\ml_m,\delta,r)$. 
The critical step is to obtain control on the separation of the point spectrum of $\ml$ outside the essential spectral radius $\alpha$. More precisely, the following algorithm will compute numbers $\delta = \delta_{\text{c}}$, $\alpha<r=r_{\text{c}}<1$ such that $\delta_{\text{c}}<1-r_{\text{c}}$ and $\varepsilon =\varepsilon_{\text{c}} > 0$ such that for any $\eta$ with $\text{mesh}(\eta)=\varepsilon_{\text{c}}$
\begin{enumerate}
\item $\text{mesh}(\eta)\le(\Gamma)^{-1}\varepsilon_0(\ml_m,r,\delta)$;
\item $B(\rho, \delta_{\text{c}})\cap B(0,r_{\text{c}})=\emptyset$, where $\rho$ is the dominant eigenvalue of $\ml_m$.
\item If $\rho_i\not=\rho$ is an eigenvalue of $\ml_m$, then $\rho_i\in B(0,r_{\text{c}})$.
\end{enumerate}
Thus we obtain the a spectral gap for $\ml_m$ and consequently, by Theorem \ref{Th1}, a spectral gap for $\ml$.
\begin{algorithm}\label{alg} Given $T$ and $p\in\mathbb N$, then
\begin{enumerate}
\item Pick $\delta=1/p$
\item Set $r=1-2\delta$.
\item Choose $m$, the number of partition intervals.
\item Set $\varepsilon = \frac{1}{m}$, the mesh size of the partition.
\item Find the matrix representation of $\ml_m$
\item Check if $\varepsilon\le(\Gamma)^{-1}\varepsilon_0^*(\ml_m,\delta,r)$.\\
If (6) is not satisfied, feed in a larger $m$ repeat (3)-(6); otherwise, continue.\\
\item Check that $B(\rho, \delta) \cap B(0,r)=\emptyset$ and $\rho_i\in B(0,r)$ for any eigenvalue $\rho_i$ of $\ml_m$ with $\rho_i\not=\rho$.
\item If (7) is satisfied, report $\delta_{\text{c}}:=\delta$, $r_{\text{c}}:=r$ and $m_{\text{c}}:=m$; otherwise, multiply $p$ by $2$ and repeat steps
(1)-(7) starting with the last $m$ that satisfied (7).
\end{enumerate}
\end{algorithm}
\begin{proposition}\label{A}
Algorithm \ref{alg} stops after finitely many steps.
\end{proposition}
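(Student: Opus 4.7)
Algorithm \ref{alg} has a nested structure, so the plan is to verify two things: (i) for each sufficiently large $p$ (so that $r=1-2/p>\alpha$), the inner loop (steps 3--6) on $m$ terminates, and (ii) for such $p$ the spectral test (7) passes once the inner loop has produced an $m$ satisfying (6). Together these yield termination: the doubling of $p$ in step (8) eventually produces a value exceeding the threshold $p_0=\lceil 2/(1-\alpha)\rceil$ beyond which both (i) and (ii) hold, after which the algorithm reports.

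For (i), fix such a $p$, so $\delta=1/p$ and $r=1-2\delta$ with $r>\alpha$. Since $1/m\to 0$, it suffices to prove that $\varepsilon_0^*(\ml_m,r,\delta)$ is bounded below by a positive constant as $m\to\infty$. Because $H^*_{\delta,r}(\ml_m)$ is the only $m$-dependent quantity entering $\varepsilon_0^*$, and it appears in the denominator, the plan is to bound it uniformly in $m$. The two parts of Lemma \ref{Le7} together show that $H^*_{\delta,r}(\ml_m)$ is comparable---up to a factor depending only on $B_1$, $r$, $\alpha$---to the standard resolvent norm $H_{\delta,r}(\ml_m)$. To bound the latter uniformly, apply Theorem \ref{Th1} with $P_1=\ml$, $P_2=\ml_m$, and an auxiliary parameter $\delta''\in(0,\delta)$: by Lemma \ref{Le5}(3) we have $|||\ml-\ml_m|||\le\Gamma/m<\varepsilon_0(\ml,r,\delta'')$ for all $m$ large, so Theorem \ref{Th1} yields a bound on $\|(z-\ml_m)^{-1}\|$ valid for all $z\in\mathbb C\setminus V_{\delta'',r}(\ml)$ that is independent of $m$. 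To transfer this bound to $z\in\mathbb C\setminus V_{\delta,r}(\ml_m)$, use Corollary \ref{Co1} together with Remark \ref{rem:simple_spectral_pic}: since $\sigma(\ml)\cap\{|z|>r\}=\{1\}$ is simple and $\sigma(\ml_m)\subset V_{\delta'',r}(\ml)$, the perturbation has a unique eigenvalue $\rho_m\in B(1,\delta'')$ with $\rho_m\to 1$; once $m$ is large enough that $|1-\rho_m|\le\delta-\delta''$, we obtain $V_{\delta'',r}(\ml)\subset V_{\delta,r}(\ml_m)$, so the complements satisfy the reverse inclusion and the uniform bound transfers.

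For (ii), the same Corollary \ref{Co1} analysis shows that, for $m$ large, $\ml_m$ possesses a unique simple eigenvalue $\rho_m\in B(1,\delta)$ outside $B(0,r)$, with all remaining spectrum inside $B(0,r)$. Since $|\rho_m-1|\le\delta''<\delta$ gives $|\rho_m|>1-\delta=r+\delta$, we have $B(\rho_m,\delta)\cap B(0,r)=\emptyset$, verifying both halves of step (7). The principal technical obstacle is the spectral bookkeeping in (i): managing two perturbation scales $(\delta'',r)$ and $(\delta,r)$ so that the Keller--Liverani resolvent bound obtained on the complement of $V_{\delta'',r}(\ml)$ can be transferred to the $m$-dependent complement of $V_{\delta,r}(\ml_m)$, where $H^*_{\delta,r}(\ml_m)$ is actually defined.
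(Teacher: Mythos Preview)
The paper's own proof is a one--line reference to Proposition~1 of \cite{BB}, so there is no detailed argument here to compare against; your sketch is in fact far more explicit than what the paper provides, and the central mechanism you identify---running Keller--Liverani with $P_1=\ml$ and $P_2=\ml_m$ to obtain both a uniform upper bound on $H^*_{\delta,r}(\ml_m)$ (hence a uniform positive lower bound on $\varepsilon_0^*$) and the spectral localisation $\sigma(\ml_m)\subset V_{\delta'',r}(\ml)=B(0,r)\cup B(1,\delta'')$ needed for step~(7)---is exactly the idea underlying Lemma~\ref{Le6} and the approach of \cite{BB,Li}.

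There is, however, a genuine gap in your claim~(ii). You assert that for any $p$ with $r>\alpha$, step~(7) passes \emph{as soon as} the inner loop outputs an $m$ satisfying~(6). But your justification of~(7) invokes Corollary~\ref{Co1} with $P_1=\ml$, which requires $|||\ml-\ml_m|||\le\varepsilon_0(\ml,r,\delta'')$; this is a threshold on $m$ that is \emph{independent of}~(6) and may be more stringent. Condition~(6) is a statement with $P_1=\ml_m$, and from it (via Corollary~\ref{Co1}) one only deduces that $\sigma(\ml)\subset V_{\delta,r}(\ml_m)$ and that multiplicities match in components away from~$0$. This forces exactly one simple eigenvalue of $\ml_m$ in $B(1,\delta)$, but it does \emph{not} rule out stray eigenvalues of $\ml_m$ in the annulus $r<|z|<r+\delta$, whose $\delta$--balls meet $B(0,r)$ and hence escape the multiplicity constraint. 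Such eigenvalues would cause~(7) to fail even though~(6) holds. Thus the algorithm need not report at the first $p$ with $r>\alpha$, and your termination claim, as written, does not follow.

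To close the gap you must bring the outer loop into the argument. One route (as in \cite{BB}): assume non--termination, so $p_k\to\infty$, $\delta_k\to0$, $r_k\to1$, and $m_k$ is non--decreasing. If $m_k$ is eventually constant, say $m_k\equiv m_*$, then~(6) together with Corollary~\ref{Co1} (now with $P_1=\ml_{m_*}$) forces $\sigma(\ml)\subset V_{\delta_k,r_k}(\ml_{m_*})$ with matching multiplicities for every~$k$; letting $\delta_k\to0$, $r_k\to1$ shows $\ml_{m_*}$ has total spectral multiplicity at most~$1$ in $\{|z|\ge1\}$, contradicting the persistent failure of~(7). If instead $m_k\to\infty$, one argues that the inner loop at stage~$k$ cannot terminate before $m_k$ reaches the KL--threshold for the \emph{current} $(\delta_k,r_k)$, or else controls the annulus eigenvalues directly; this is where the details of \cite{BB} are needed.
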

\begin{proof}
The proof is similar to the proof of Proposition 1 of \cite{BB} .
\end{proof}
\subsection{A computable bound for the rate of decay of correlations of $\ml$}
We will benefit from Algorithm \ref{alg} in many directions. First it provides us with a nice spectral picture of ${\ml}$; i.e., an estimate on the size of its spectral gap; moreover it enables us to compute an upper bound on the norm of the resolvent of the continuous (infinite dimensional) operator $\ml$ bounded away from its spectrum. Such a computable estimate was impossible to do before Algorithm \ref{alg}.
\begin{lemma}\label{Le:res}
We have
\begin{equation*}
\begin{split}
\sup\{||(z-\ml)^{-1}||:\, z\in\mathbb C\setminus V_{\delta_{\text{c}},r_{\text{c}}}(\ml_{m_c})\}\le  \frac{4(C2+C3)(M/r_{\text{c}})^{n^*_1}}{(M-r_{\text{c}})2\varepsilon^*_1} :=H^*_{\delta_{\text{c}},r_{\text{c}}}(\ml),
\end{split}
\end{equation*}
where
$$\varepsilon^*_1= \frac{\left(r_{\text{c}}/M\right)^{n^*_1+n_2^*}}{8C_3(H^*_{\delta_{\text{c}},r_{\text{c}}}(\ml_{m_c})C_3+\frac{C_1}{M-r_{\text{c}}})},$$
$$n_1^*=\lceil\frac{\ln2C_2}{\ln r_{\text{c}}/\alpha}\rceil,$$
and
$$n_2^*=\lceil\frac{\ln 8C_3C_2(C_2+C_3+2)MH^*_{\delta_{\text{c}},r_{\text{c}}}(\ml_{m_{\text{c}}})}{\ln r_{\text{c}}/\alpha}+n^*_1\frac{\ln(M/r_{\text{c}})}{\ln r_{\text{c}}/\alpha}\rceil .$$
\end{lemma}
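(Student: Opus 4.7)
The plan is to apply Theorem~\ref{Th1} in the swapped configuration $P_1 := \ml_{m_c}$ and $P_2 := \ml$, treating the continuous operator as a perturbation of the finite-rank piecewise-linear discretization at the mesh size certified by Algorithm~\ref{alg}. This inversion of the usual perturbation direction is what forces every constant on the right-hand side to refer either to the finite-dimensional matrix of $\ml_{m_c}$ or to a universal quantity coming from Lemma~\ref{Le5}, and hence to be computable.

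First I would verify hypotheses~(\ref{E0})--(\ref{E2}) for the pair $(\ml_{m_c}, \ml)$. The uniform bound $|\ml_*^n| \le M^n$ (so $C_1 = 1$) and the Lasota-Yorke inequality $\|\ml_*^n f\| \le \alpha^n \|f\| + C_3 M^n |f|$ (with $C_2 = 1$ and $C_3 = B_1/(M(1-\alpha))+1$) are supplied by Lemma~\ref{Le5}, valid for both operators; condition~(\ref{E2}) follows from the quasi-compact spectral picture of Remark~\ref{rem:simple_spectral_pic}; and the closeness requirement $|||\ml - \ml_{m_c}||| \le \varepsilon_0(\ml_{m_c}, r_c, \delta_c)$ is exactly what step~(6) of Algorithm~\ref{alg} enforces, via $|||\ml - \ml_{m_c}||| \le \Gamma \varepsilon_c \le \varepsilon_0^*(\ml_{m_c}, r_c, \delta_c) \le \varepsilon_0(\ml_{m_c}, r_c, \delta_c)$.

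With the hypotheses in place, Theorem~\ref{Th1} delivers, for every $z \in \mathbb{C} \setminus V_{\delta_c, r_c}(\ml_{m_c})$, a bound of the form
$$\|(z - \ml)^{-1} f\| \le \frac{4(C_2+C_3)}{M - r_c}\left(\frac{M}{r_c}\right)^{n_1}\|f\| + \frac{1}{2\varepsilon_1}|f|,$$
where $n_1, \varepsilon_1$ are built from the (a priori non-computable) number $H_{\delta_c, r_c}(\ml_{m_c})$. I would then upgrade $n_1, n_2, \varepsilon_1$ to their starred counterparts by monotonicity, using that $H^*_{\delta_c, r_c}(\ml_{m_c})$ is a valid and computable upper bound on $H_{\delta_c, r_c}(\ml_{m_c})$, and finally use $|f| \le \|f\|$ from~(\ref{eq:norm}) to collapse the two terms and arrive at the quantity $H^*_{\delta_c, r_c}(\ml)$ defined in the statement.

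The main obstacle, and the reason so much scaffolding has been built upstream, is the circular character of the KL estimate: Theorem~\ref{Th1} requires as input $H_{\delta, r}(\ml_{m_c})$, the resolvent norm of the operator used as $P_1$, which is an uncountable supremum and is not directly available from the matrix of $\ml_{m_c}$. Lemma~\ref{Le7} severs this loop by controlling the resolvent of $\ml_{m_c}$ acting on an arbitrary Lipschitz $f$ in terms of its action on the finite-dimensional object $\Pi_m f$ plus explicit remainders; that reduction depends essentially on the projection property of $\Pi_m$ (Lemma~\ref{Le3}) and is the structural reason two distinct discretization schemes are introduced in the paper.
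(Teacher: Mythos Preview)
Your proposal is correct and follows essentially the same route as the paper: swap the roles so that $P_1=\ml_{m_c}$ and $P_2=\ml$, invoke Lemma~\ref{Le5} and the certified inequality $\Gamma\varepsilon_c\le\varepsilon_0^*(\ml_{m_c},r_c,\delta_c)\le\varepsilon_0(\ml_{m_c},r_c,\delta_c)$ from Algorithm~\ref{alg} to feed Theorem~\ref{Th1}, and then pass from $H_{\delta_c,r_c}(\ml_{m_c})$ to the computable $H^*_{\delta_c,r_c}(\ml_{m_c})$ by monotonicity (which forces $\varepsilon_1\ge\varepsilon_1^*$ and hence the stated bound). One cosmetic remark: $n_1$ does not depend on $H_{\delta,r}$, so $n_1^*=n_1$ and no ``upgrade'' is needed there; only $n_2$ and $\varepsilon_1$ require the monotonicity step.
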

\begin{proof}
We have $\varepsilon_{\text{c}}\le\Gamma^{-1}\varepsilon_{0}(\ml_{m_{\text{c}}},\delta_{\text{c}},r_{\text{c}})$. Therefore, by Theorem \ref{Th1}, $z\in\mathbb C\setminus V_{\delta_{\text{c}},r_{\text{c}}}(\ml_{m_c})$,
$$||(z-\ml)^{-1}||\le \frac{4(C2+C3)(M/r)^{n_1}}{(M-r_{\text{c}})2\varepsilon_1(\ml_{m_{\text{c}}},\delta_{\text{c}}, r_{\text{c}})} \le \frac{4(C2+C3)(M/r)^{n_1}}{(M-r_{\text{c}})2\varepsilon^*_1}.$$
Here we have used the fact that $H_{\delta_{\text{c}},r_{\text{c}}}(\ml_{m_c})<H^*_{\delta_{\text{c}},r_{\text{c}}}(\ml_{m_c})$ which implies that $\varepsilon_1(\ml_{m_{\text{c}}},\delta_{\text{c}}, r_{\text{c}})>\varepsilon^*_1$. This completes the proof of the lemma.
\end{proof}
We now provide a computable estimate on the rate of decay of correlations of $\ml$.
\begin{lemma}\label{decay}
Set $R:=\min\{(1-\delta_{\text{c}}+r_{\text{c}})/2, (\rho_{\text{c}}-\delta_{\text{c}}+r_{\text{c}})/2\}$. Then for $f\in C^{\lip}(I)$, $\int f=0$, we have
$$||\ml^n f||\le R^{n+1}H^*_{\delta_{\text{c}},r_{\text{c}}}(\ml)||f||.$$
\end{lemma}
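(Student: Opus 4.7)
The plan is to realize $\ml^n f$ as a Dunford/Riesz contour integral around the ``non-leading'' part of the spectrum and then estimate directly using the uniform resolvent bound supplied by Lemma \ref{Le:res}. By Corollary \ref{Co1} applied through Algorithm \ref{alg}, $\sigma(\ml)\subseteq V_{\delta_c,r_c}(\ml_{m_c})$, and the isolated component $B(\rho_c,\delta_c)$ contains exactly one spectral point of $\ml$, namely the eigenvalue $1$ (no other eigenvalue of $\ml$ of unit modulus exists, by Remark \ref{rem:simple_spectral_pic}, and the corresponding connected component of $V_{\delta_c,r_c}(\ml_{m_c})$ is forced to carry one simple spectral point by Corollary \ref{Co1}). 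The remaining spectrum is confined to the other part of $V_{\delta_c,r_c}(\ml_{m_c})$, which lies well inside the disk of radius $R$; the defining formula of $R$ as a minimum of two midpoints is calibrated precisely so that the circle $|z|=R$ separates these two pieces and is interior to the resolvent set.

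The first step is to observe that for the transfer operator $\ml$ the dual eigenvector at $1$ is the constant function $\mathbf 1$ (since $\int \ml g\, d\lambda = \int g\,d\lambda$), so the Riesz projector onto $\{1\}$ has the form $\Pi_1 g=(\int g\, d\lambda)\,f^*$. The hypothesis $\int f=0$ therefore yields $\Pi_1 f=0$, i.e.\ $f$ lies entirely in the complementary spectral subspace. The second step is to write, via the Dunford functional calculus,
\begin{equation*}
\ml^n f \;=\; \ml^n(I-\Pi_1) f \;=\; \frac{1}{2\pi i}\oint_{|z|=R} z^n\,(z-\ml)^{-1} f\,dz,
\end{equation*}
which is legitimate because the positively oriented circle $|z|=R$ encircles $\sigma(\ml)\setminus\{1\}$, excludes $1$, and sits in the resolvent set. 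The third step is the routine length-times-supremum estimate: using Lemma \ref{Le:res} to bound $\|(z-\ml)^{-1}\|\le H^*_{\delta_c,r_c}(\ml)$ uniformly on the contour and $|z^n|\le R^n$ there, one gets
\begin{equation*}
\|\ml^n f\| \;\le\; \frac{1}{2\pi}\cdot 2\pi R\cdot R^n\cdot H^*_{\delta_c,r_c}(\ml)\cdot \|f\| \;=\; R^{n+1}\,H^*_{\delta_c,r_c}(\ml)\,\|f\|,
\end{equation*}
which is the desired bound.

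The main obstacle is the geometric bookkeeping that puts the circle $|z|=R$ inside $\mathbb C\setminus V_{\delta_c,r_c}(\ml_{m_c})$: one must argue that $R$ is large enough to clear every $\delta_c$-neighbourhood of the subdominant eigenvalues of $\ml_{m_c}$ (all of which lie in $B(0,r_c)$ by step (7) of Algorithm \ref{alg}) and small enough to leave $B(\rho_c,\delta_c)$ outside the disk. The inequalities $R<\rho_c-\delta_c$ and $R<1-\delta_c$ implicit in the two terms of the $\min$ handle the latter (ensuring $1$ is strictly outside $|z|\le R$), while the gap $\rho_c>r_c+\delta_c$ secured by Algorithm \ref{alg} is what makes the midpoint placement of $R$ above $r_c$ genuinely separate the two pieces of $\sigma(\ml)$. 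Once this placement is justified, the proof reduces to the three lines displayed above.
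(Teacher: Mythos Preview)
Your argument is essentially the paper's own proof: both introduce the Riesz contour integral over the circle $|z|=R$, use $\int f=0$ to kill the rank-one projection onto $f^*$, and then bound the integral by the length-times-supremum estimate together with Lemma \ref{Le:res}. Your write-up is in fact more explicit than the paper's about why the circle $|z|=R$ lies in $\mathbb C\setminus V_{\delta_{\text c},r_{\text c}}(\ml_{m_c})$ and why the spectral projection at $1$ has the form $g\mapsto (\int g)\,f^*$; the paper simply asserts these facts.
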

\begin{proof}
Let $\pi_{R}$ be the following spectral projection
$$\pi_{R}:=\frac{1}{2\pi i}\int_{\{z\in\mathbb C:\, |z|=R\}}(z-\ml)^{-1}dz.$$
Then for any $f\in C^{\lip}(I)$
$$\ml f=f^*\int f+\ml\pi_{R}f.$$
This implies that if $\int f=0$ we have $\pi_{R}f=f$ and consequently
$$\ml^n f=\frac{1}{2\pi i}\int_{\{z\in\mathbb C:\, |z|=R\}}z^n(z-\ml)^{-1}dz.$$
Thus, 
\begin{equation*}
\begin{split}
||\ml^n f||&\le R^{n+1}\sup_{|z|=R}||(z-\ml)^{-1}||\,||f||\\
&\le R^{n+1}\sup_{\{z\in\mathbb C\setminus V_{\delta_{\text{c}},r_{\text{c}}}(\ml_{m_c})\}}||(z-\ml)^{-1}||\,||f||\le R^{n+1}H^*_{\delta_{\text{c}},r_{\text{c}}}(\ml)||f||
\end{split}
\end{equation*}
\end{proof}


\section{Rigorous approximation with fast rates}\label{Rates}

The projection-based discretization does not appear to be particularly well suited 
to estimating the rate of approximation of the Perron eigenvector $f_{\eta_m}^*$ for $\ml_m$ to the invariant density $f^*$ for $\ml$, where $\eta_m$ denotes the 
uniform $m-$cell partition. We discuss this in a little more detail in the next section.
However,  we can use the spectral estimates on $\ml$ obtained via 
perturbations $\ml_m$ combined with the discretization scheme associated to 
$\mathbb{P}_m$ to obtain good, rigorous approximation rates in the uniform norm.  

We begin by deriving a suitable 
Lasota-Yorke inequality for the operator $\mathbb{P}_m$.
\begin{lemma}\label{Le:new1}
 For $f\in\mathcal C^{\lip}(I)$ we have $\mathbb{P}_m f\in\mathcal C^{\lip}(I)$ and
$$\lip(\mathbb{P}_m f)\le\beta\lip(f)+\bar B_1|f|,$$
where $\beta=(3/2)\alpha$ and $\bar B_1=(3/2)B_1$.
\end{lemma}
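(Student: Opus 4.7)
The proof plan is essentially a direct two-step composition, exploiting that $\mathbb{P}_m$ is literally the composition $Q_m \circ \ml$. First I would observe that $\ml f$ is already Lipschitz on $I$ (since $k \geq k_0$, by the standing assumptions, $\ml$ maps $\mathcal C^{\lip}(I)$ into itself), and then $Q_m$ applied to a Lipschitz function is continuous and piecewise linear with respect to the $b_i$, hence Lipschitz. So $\mathbb{P}_m f \in \mathcal C^{\lip}(I)$ for free.

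Next, for the quantitative bound, I would chain the two Lipschitz estimates already established. By statement (2) of Lemma \ref{Le:new}, applied to $g := \ml f$,
\[
\lip(\mathbb{P}_m f) \;=\; \lip(Q_m \ml f) \;\le\; \tfrac{3}{2}\,\lip(\ml f).
\]
Then, by the Lasota-Yorke inequality of Lemma \ref{Le2},
\[
\lip(\ml f) \;\le\; \alpha\,\lip(f) + B_1\,|f|.
\]
Substituting gives
\[
\lip(\mathbb{P}_m f) \;\le\; \tfrac{3}{2}\bigl(\alpha\,\lip(f) + B_1\,|f|\bigr) \;=\; \beta\,\lip(f) + \bar B_1\,|f|,
\]
with $\beta = (3/2)\alpha$ and $\bar B_1 = (3/2)B_1$, exactly as claimed.

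There is no real obstacle here: both ingredients are already available from earlier in the paper. The only mild conceptual point is that, unlike the projection-based scheme where the factor coming from the discretization was a clean $1$ (cf.\ Lemma \ref{Le4}(1)), here the Markov discretization $Q_m$ contributes a multiplicative $3/2$ to both the contraction factor and the constant, which is why the new Lasota-Yorke inequality is slightly worse than the one for $\ml$ itself. One should also note that $\beta = (3/2)\alpha$ need not be less than $1$; this is fine for the present lemma (which is only a regularity statement about $\mathbb{P}_m$) but would be something to track if one wanted to iterate $\mathbb{P}_m$ directly, in which case a preliminary choice of $k$ large enough to make $(3/2)\alpha < 1$ would be required.
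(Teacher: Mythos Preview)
Your proof is correct and follows exactly the approach indicated in the paper: the paper's proof consists of the single line ``The proof follows from Lemma~\ref{Le2} and~\ref{Le:new},'' and you have simply spelled out that composition. Your closing remark about needing $(3/2)\alpha<1$ for iteration is also precisely what the paper addresses in the sentence immediately following the lemma.
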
 
\begin{proof}
The proof follows from Lemma \ref{Le2} and \ref{Le:new}
\end{proof}
Without loss of generality, we will assume\footnote{In fact we can choose $k_0\ge k$ so that that $M\alpha_0^k<2/3$ which makes $\beta<1$.} $\beta<1$. 

Let\footnote{Note the difference between $f^*_m$ and $f_{\eta_m}^*$ the Perron eigenvectors
for $\mathbb{P}_m$ and $\ml_m$ respectively.}  $f^*_m$ denote the nonnegative, normalized Perron eigenfunction of $\ml_m$ corresponding to the eigenvalue 1. Our goal 
is to prove $|f_m^* - f^*| = O(\frac{\ln m}{m})$

First, we find a computable upper bound on the Lipschitz norm of $f_{m}^*$. Due to the fact that 
$|f_{m}^*|\ge1$, this is best done in two steps.  
\begin{lemma}\label{lip:bd} 
$f^*_{m}$ and $f^*$ satisfiy the following estimates
\begin{enumerate}
\item $|f^*_m|\le K_1$,
\item $|f^*|\le K_2$,
\item $\lip (f^*_m)\le \frac{\bar B_1}{(1-\beta)}K_1$,
\item $\lip (f^*)\le \frac{B_1}{1-\alpha}K_2$
\end{enumerate}
where $K_1:=\frac{\bar B_1}{1-\beta}+1$ and $K_2:=( \frac{B_1}{1-\alpha})+ 1$.
\end{lemma}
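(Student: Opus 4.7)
My plan is to prove the four estimates together, treating (3)--(4) as corollaries of (1)--(2) once the sup-norm bounds are in hand. The heart of the argument is a coupled pair of inequalities: a Lasota--Yorke estimate controlling $\lip(\cdot)$ in terms of $|\cdot|$, and a mean-value estimate going the other way, controlling $|\cdot|$ in terms of $\lip(\cdot)$ using the fact that $f^*$ and $f^*_m$ are probability densities.

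First I would observe that both $f^*$ and $f^*_m$ are nonnegative with unit integral. For $f^*$ this is from Section \ref{Spaces}; for $f^*_m$, recall that $\mathbb{P}_m = Q_m \circ \ml$ is a composition of Markov operators ($\ml$ is a transfer operator, $Q_m$ was constructed to be Markov in Section \ref{sec:markov}), so $f^*_m \ge 0$ and $\int f^*_m = 1$. Now I apply the Lasota--Yorke inequalities to the invariance equations $f^* = \ml f^*$ (Lemma \ref{Le2}) and $f^*_m = \mathbb{P}_m f^*_m$ (Lemma \ref{Le:new1}). Since $\alpha, \beta < 1$, rearranging yields
\[
\lip(f^*) \le \tfrac{B_1}{1-\alpha}\,|f^*|, \qquad \lip(f^*_m) \le \tfrac{\bar B_1}{1-\beta}\,|f^*_m|.
\]
These are precisely the expressions appearing in (3)--(4), modulo substituting the sup-norm bounds.

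Next I would produce the sup-norm control using the probability-density property. For any Lipschitz $f \ge 0$ on $[0,1]$ with $\int f = 1$, the intermediate value theorem provides $x_0$ with $f(x_0) = 1$, so the estimate $|f(x) - 1| \le \lip(f)\,|x-x_0| \le \lip(f)$ gives the baseline $|f|_\infty \le 1 + \lip(f)$. A sharper variant, obtained by integrating rather than evaluating at a single point,
\[
f(x) - 1 \;=\; \int_0^1\!\bigl(f(x)-f(y)\bigr)\,dy \;\le\; \lip(f)\int_0^1 |x-y|\,dy \;\le\; \tfrac{1}{2}\lip(f),
\]
gives $|f|_\infty \le 1 + \tfrac12\lip(f)$. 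Substituting the Lasota--Yorke bound of the preceding step into either inequality and solving the resulting linear inequality in $|f^*|$ (respectively $|f^*_m|$) yields the claimed bounds $|f^*| \le K_2$ and $|f^*_m| \le K_1$. Once (1) and (2) are established, the Lipschitz bounds (3) and (4) drop out immediately by feeding them back into the displayed inequalities.

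The main obstacle is closing the algebraic loop to land on the specific constants $K_1, K_2$ advertised in the statement. The naive estimate $|f|_\infty \le 1 + \lip(f)$ only yields $|f^*| \le 1/(1 - B_1/(1-\alpha))$, which is strictly larger than $K_2 = 1 + B_1/(1-\alpha)$ whenever $B_1 > 0$. To recover the tighter form $K_2$ one needs the sharper integral bound $|f|_\infty \le 1 + \tfrac12 \lip(f)$, and even then the elementary comparison $\tfrac{2(1-\alpha)}{2(1-\alpha)-B_1} \le 1 + \tfrac{B_1}{1-\alpha}$ only holds when $B_1 \le 1-\alpha$ (equivalently $\alpha + B_1 \le 1$). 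So the delicate point will be either verifying/assuming this smallness condition (reasonable by choosing the iterate $k$ in $T = \tau^k$ large enough), or replacing this closing step by an iterative argument that tracks $L_n = \lip(\ml^n \mathbf{1})$ and $A_n = |\ml^n \mathbf{1}|$ and uses $A_n \le M$ to control the limit values along $\ml^n \mathbf{1} \to f^*$.
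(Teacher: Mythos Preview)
Your treatment of parts (3) and (4) matches the paper exactly: apply the single-step Lasota--Yorke inequality (Lemma~\ref{Le2}, Lemma~\ref{Le:new1}) to the fixed-point equation and rearrange.

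For (1) and (2), however, your primary strategy---closing an algebraic loop between the density bound $|f^*| \le 1 + \lip(f^*)$ (or the sharper $1 + \frac12\lip(f^*)$) and the Lasota--Yorke bound $\lip(f^*) \le \frac{B_1}{1-\alpha}|f^*|$---does not yield the stated constants $K_1, K_2$. As you correctly diagnose, the loop only closes under an extra smallness hypothesis like $B_1 \le 1-\alpha$, which is not assumed. The paper avoids the loop entirely by \emph{iterating from $\mathbf{1}$} rather than working directly at the fixed point. Since the matrix for $\mathbb{P}_m$ is stochastic and strictly positive (hence mixing), one has $f^*_m = \lim_n \mathbb{P}_m^n \mathbf{1}$ uniformly. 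Iterating Lemma~\ref{Le:new1} from $\mathbf{1}$, where $\lip(\mathbf{1})=0$ and $|\mathbf{1}|=1$, the paper records
\[
\lip(\mathbb{P}_m^n \mathbf{1}) \le \beta^n\lip(\mathbf{1}) + \frac{\bar B_1}{1-\beta}\,|\mathbf{1}| = \frac{\bar B_1}{1-\beta},
\]
and then the nonnegative-unit-integral argument gives $|\mathbb{P}_m^n \mathbf{1}| \le 1 + \frac{\bar B_1}{1-\beta} = K_1$ for all $n$; passing to the limit yields (1). The same scheme with $\ml$ (extracting a uniformly convergent subsequence of $\ml^n\mathbf{1}$ via Helly) gives (2). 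This is essentially your option (b), which you list only as a fallback; in the paper it is the main device, and it is precisely what decouples the two estimates and delivers the specific constants $K_1, K_2$ without any auxiliary smallness assumption.
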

\begin{proof}
For the first estimate, note that the matrix representing $\mathbb{P}_m$ (as determined in Section
\ref{sec:markov}) is stochastic and strictly positive, hence mixing. It follows that 
$$f^*_m = \lim_n \mathbb{P}_m^n \mathbf{1}$$
uniformly on the interval $I$. But,as a consequence of Lemma \ref{Le:new1} we get 
$$\lip(\mathbb{P}_m^n \mathbf{1})| \leq \beta^n \lip(\mathbf{1}) + \frac{\bar B_1}{1-\beta} |\mathbf{1}|
= \frac{\bar B_1}{1 - \beta}.$$ Since $\mathbb{P}_m^n \mathbf{1}$ is 
nonnegative, with unit $L^1-$ norm we find 
$| \mathbb{P}_m^n \mathbf{1}| \leq 1 +  \frac{\bar B_1}{1 - \beta}$.  This is the first estimate.

The second estimate is similar, using Lemma \ref{Le1} in place of Lemma \ref{Le:new1}
to obtain a uniform bound  on the Lipschitz constants and $L^\infty-$norm of the iterates:
$\lip( \ml^n \mathbf{1}) \leq  \frac{B_1}{1-\alpha}$ and $|  \ml^n \mathbf{1}| \leq \frac{B_1}{1-\alpha} +1$.
By Helley's Theorem, we may obtain a uniformly convergent subsequence from the sequence of iterates; the 
averages along this subsequence converge uniformly to $f^*$, giving the second estimate. 

Next, using Lemma \ref{Le:new1}, we obtain
$$\lip(f_{m}^*)=\lip(\mathbb{P}_m f_{m}^*)\le\beta\lip(f_{m}^*)+\bar B_1|f_{m}^*|.$$
Therefore, 
$$\lip(f_{m}^*)\le \frac{\bar B_1}{(1-\beta)}|f_{m}^*|\leq \frac{\bar B_1}{(1-\beta)}K_1.$$ 
The same argument, using Lemma \ref{Le1} and estimate (2) gives the inequality in part (4).
\end{proof}
Now we state and proof of our main result, Theorem \ref{Th2}. There are certainly many ways to proceed which lead to a convergence rate of order $\frac{\ln(m)}{m}$ \footnote{ To improve the rate of convergence from $O(\frac{\ln m}{m})$ to $O(\frac{1}{m})$, one would need $\lip(f^*-Q_{m}f^*)=O(\frac{1}{m})$. However, this cannot be obtained unless $f^*$ is very regular. For the class of maps under consideration $f^*$ is, in general, only Lipschitz continuous and hence $\lip(f^*-Q_mf^*)=O(1)$.
The situation is analogous to the failure to obtain rate $O(\frac{1}{m})$ for Ulam's method
via spectral perturbation arguments; see \cite{DL}.}. However, different proofs will lead to different constants multiplied by this rate. Since our target is to produce explicit computation of the error bound, we write the proof in a way which makes the constants as small as possible.
\begin{theorem}\label{Th2}
For any $m\in\mathbb N$, we have
$$|f^*-f^*_m|\le \frac{(K_1^2 +K_2^2)RH^*_{\delta_{\text{c}},r_{\text{c}}}(\ml)+ \frac{5M^2}{2}\lceil\ln(m)/\ln R^{-1}\rceil K_1}{m}.$$
\end{theorem}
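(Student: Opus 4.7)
The plan is to exploit the invariance relations $f^*=\ml^N f^*$ and $f_m^*=\mathbb{P}_m^N f_m^*$, valid for every $N\ge 1$, and decompose
\begin{equation*}
f^* - f_m^* \;=\; \ml^N(f^*-f_m^*) \;+\; (\ml^N - \mathbb{P}_m^N) f_m^*.
\end{equation*}
I would bound the first term using the decay-of-correlations estimate of Lemma \ref{decay}, and the second via a standard telescoping of the operator difference combined with the $O(1/m)$ approximation error from Lemma \ref{Le:new}(3). Choosing $N:=\lceil\ln m/\ln R^{-1}\rceil$, so that $R^N\le 1/m$, balances the two contributions.

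For the first term, since $\int(f^*-f_m^*)\,d\lambda=0$, Lemma \ref{decay} applies. The key observation is that the definitions $K_1=1+\bar B_1/(1-\beta)$ and $K_2=1+B_1/(1-\alpha)$ are tailored so that Lemma \ref{lip:bd} yields $\|f^*\|=\lip(f^*)+|f^*|\le (K_2-1)K_2+K_2=K_2^2$ and, analogously, $\|f_m^*\|\le K_1^2$. Hence $\|f^*-f_m^*\|\le K_1^2+K_2^2$, and with the chosen $N$ the first summand in the claim emerges:
\begin{equation*}
|\ml^N(f^*-f_m^*)|\;\le\; R^{N+1}\,H^*_{\delta_\text{c},r_\text{c}}(\ml)\,(K_1^2+K_2^2)\;\le\;\frac{R(K_1^2+K_2^2)\,H^*_{\delta_\text{c},r_\text{c}}(\ml)}{m}.
\end{equation*}

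For the second term, the identity $\ml^N-\mathbb{P}_m^N=\sum_{j=0}^{N-1}\ml^j(\ml-\mathbb{P}_m)\mathbb{P}_m^{N-1-j}$, applied to $f_m^*$, collapses using $\mathbb{P}_m^{N-1-j}f_m^*=f_m^*$ and $\ml-\mathbb{P}_m=(I-Q_m)\ml$ to $\sum_{j=0}^{N-1}\ml^j(I-Q_m)\ml f_m^*$. Each summand is controlled by $|\ml^j|\cdot|(I-Q_m)\ml f_m^*|\le M\cdot\frac{5}{2m}\lip(\ml f_m^*)$, where the first factor comes from $|\ml^j|\le M$ (Lemma \ref{Le1}, valid for all $j\ge 0$ since $M\ge 1$) and the second from Lemma \ref{Le:new}(3). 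Combining the Lasota-Yorke inequality of Lemma \ref{Le2} with the a priori bound of Lemma \ref{lip:bd}(3) and the identities $\bar B_1/(1-\beta)=K_1-1$ and $B_1\le M$, one obtains a bound of the form $\lip(\ml f_m^*)\le MK_1$; summing $N$ such estimates produces the second summand $\tfrac{5M^2NK_1}{2m}$.

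The main obstacle is the tight estimate $\lip(\ml f_m^*)\le MK_1$ in the last step. The naive application of Lemma \ref{Le2} only delivers a bound of order $B_1K_1/(1-\beta)$, which still contains the unwanted factor $(1-\beta)^{-1}$; absorbing this factor cleanly into the product $MK_1$---rather than allowing it to propagate into the final error constant---is where one must carefully keep the constants as small as possible, in the spirit of the remark preceding the theorem. Once this is in hand, adding the two summands yields the stated bound.
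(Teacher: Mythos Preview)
Your argument is essentially the paper's own proof: the same decomposition $f^*-f_m^*=\ml^N(f^*-f_m^*)+(\ml^N-\mathbb{P}_m^N)f_m^*$, the same use of Lemma~\ref{decay} together with $\|f^*\|+\|f_m^*\|\le K_2^2+K_1^2$ for the first term, the same telescoping $\ml^N-\mathbb{P}_m^N=\sum_{q}\ml^{N-q}(\ml-\mathbb{P}_m)\mathbb{P}_m^{q-1}$ collapsed via $\mathbb{P}_m^{q-1}f_m^*=f_m^*$ and $\ml-\mathbb{P}_m=(I-Q_m)\ml$ for the second, and the same choice $N=\lceil\ln m/\ln R^{-1}\rceil$.

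Regarding the step you flag as the ``main obstacle'': this is exactly the inequality the paper invokes (without further comment) when it passes from $M\sum_q|(I-Q_m)\ml f_m^*|$ to $\tfrac{5M^2}{2m}\sum_q|f_m^*|$, namely $\lip(\ml f_m^*)\le M|f_m^*|$. Note, however, that the ingredients you list do not quite deliver it: Lemma~\ref{Le2} plus Lemma~\ref{lip:bd}(1),(3) give $\lip(\ml f_m^*)\le[\alpha(K_1-1)+B_1]\,|f_m^*|$, and since $B_1\le B_0=M-1$ this is $\le M|f_m^*|$ only under the additional mild condition $\alpha(K_1-1)\le 1$, equivalently $\beta(B_1+1)\le 1$. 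This holds in the paper's worked example and can always be arranged by enlarging $k$ (just as the paper already assumes $\beta<1$ without loss of generality), but strictly speaking neither your sketch nor the paper's proof makes this hypothesis explicit.
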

\begin{proof}
We have
\begin{equation}\label{dens1}
\begin{split}
|f^*-f^*_{m}|&\le |\mathbb{P}_m^n f^*_{m}-\ml^{n}f^*_{m}|+|\ml^{n}f^*_{m}-\ml^{n}f^*|\\
&= (I) + (II).
\end{split}
\end{equation}
Using Lemmas  \ref{decay} and \ref{lip:bd}, for any $n$ we obtain
\begin{equation}\label{dens2}
 (II) \le(K_1(1 + \frac{\bar B_1}{1-\beta})  +K_2(1 + \frac{B_1}{1-\alpha}))H^*_{\delta_{\text{c}},r_{\text{c}}}(\ml) R^{n+1}.
\end{equation} 
In $(I)$ the problem is that we only have the weak estimate $|\mathbb{P}_m^n|\le (2M)^n$. However, we know from Lemma \ref{Le1} that for all $n\ge 1$,  $|\ml^n|\le M$. Therefore we change the order in (I) and benefit from Lemmas \ref{Le1} and \ref{lip:bd}. 
So, we write
\begin{equation}\label{dens3}
\begin{split}
(I)&= |\ml^{n}f^*_{m}-\mathbb{P}_m^nf^*_{m}|\le\sum_{q=1}^{n}|\ml^{n-q}(\ml-\mathbb{P}_m)\mathbb{P}_m^{q-1}f^*_{m}|\\
&\le M\sum_{q=1}^{n}|(\ml-\mathbb{P}_m)(\mathbb{P}_m^{q-1}f^*_{m})|= M\sum_{q=1}^{n}|(I-Q_m)
(\ml \mathbb{P}_m^{q-1}f^*_{m})|\\
&\le\frac{5M^2}{2m}\sum_{q=1}^{n}|f^*_{m}|\le  \frac{5M^2}{2m}nK_1 .
\end{split}
\end{equation}
Choosing $n=\lceil\ln(m)/\ln(R^{-1})\rceil$ and using (\ref{dens2}) and (\ref{dens3}) completes the proof.
\end{proof}
\section{Example}\label{Exam}
In this example we use the map:
 \begin{equation}\label{map}
\tau(x)=\left\{\begin{array}{cc}
\frac{11x}{1-x}&\mbox{for $0\le x\le \frac{1}{12}$}\\
12x-i&\mbox{for $\frac{i}{12}<x\le\frac{i+1}{12}$}
\end{array}
\right. ,
\end{equation}
where $i=1,\dots,11$. 
\subsection{Computing $H^*_{\delta_{\text{c}},r_{\text{c}}}(\ml)$}
We will use Equation \ref{eq:variation_estimate} to estimate the relevant constants.
For $f\in BV$ we have 
$$V\ml_{\tau}f\le1/11Vf+2/11\|f\|_1.$$
Consequently, for any $n\ge 1$, we obtain
$$V\ml^n_{\tau}f\le(1/11)^nVf+1/5\|f\|_1.$$
Hence $\alpha_0=1/11$, $B_0=1/5$, and $M=6/5$. We set $T:=\tau$. Then $\alpha=\alpha_0M=6/55$. For $f\in\mathcal C^{\lip}(I)$ and any $n\ge1$
$$\lip(\ml f)\le6/55\lip (f)+2/(11)^2|f|.$$
We now use Algorithm \ref{alg} to obtain $m_{\text{c}}$ which allows us to compute an upper bound on the norm of the resolvent of the continuous operator $H^*_{\delta_{\text{c}},r_{\text{c}}}(\ml)$\footnote{For details on how to use the computer to compute an upper bound on $H^{*}_{\delta,r}(\ml_m)$ in the algorithm for the discrete operator $\ml_m$ see \cite{BB}.}.
\begin{table}[h]\label{table1}\caption{The output of Algorithm \ref{alg} for the map in (\ref{map})}
\begin{center}
\begin{tabular}{|c|c|c|}
  \hline
  $r$ & 0.8 & 0.8\\
  \hline
  $\delta$ & 0.1 & 0.1\\
  \hline
  $\varepsilon$ &$2\times 10^{-4}$ & $8\times10^{-5}$\\
  \hline
  $H^{*}_{\delta,r}(\ml_m)$ &  27.2822974 & 27.29122985\\
    \hline
    $(\Gamma)^{-1}\varepsilon_*$& 0.0001276212886&0.0001275730002\\
   \hline
$\text{Loop I}$& \text{Fail: reduce $\varepsilon$} &\text{Pass}\\
\hline
\text{Loop II}& \text{}& \text{Pass}\\
\hline
\text{Output}& \text{}& $\text{mesh}(m_{\text{c}}):=8\times10^{-5}$, $\delta_{\text{c}}:=0.1$, $r_{\text{c}}:=0.8$\\
\hline

\end{tabular}
\end{center}
 \end{table}

 Using this output, by Lemma \ref{Le:res}, we obtain
$$H^*_{\delta_{\text{c}},r_{\text{c}}}(\ml):=\frac{4(C2+C3)(M/r_{\text{c}})^{n_1}}{(M-r_{\text{c}})2\varepsilon^*_1}\le 36366.11326.$$
\subsection{Computing a uniform bound on the approximation error}
The first few eigenvalues of $\ml$ are estimated (to 4 decimal places) by
$$1.0000,~0.0901,~ 0.0088          
~0.0000 + 0.0069i,
~0.0000 - 0.0069i
$$ 
Next we compute the constants from Lemma \ref{decay} and  Lemma \ref{lip:bd}:
$$K_1=\frac{521}{506},\, K_2=\frac{549}{539} \text{ and } R= 0.8500$$                              
Then in Table \ref{table2} we list the number of bins $m$ and the corresponding approximation error of $|f^*-f^*_m|$. 
\begin{table}[h]\label{table2}\caption{ Error $:=|f^*-f^*_m|$, vs {m} where $f^*$ is the invariant density of the map in (\ref{map}) and $f^*_m$ is the invariant density of $\ml_m$}
\begin{center}
\begin{tabular}{|c|c|}
  \hline
  $m$ & Error\\
  \hline
  $10^6$ &0.06515863735\\
  \hline
  $7\times10^6$ &0.009314201609\\
    \hline
    \end{tabular}
 
\end{center}
 \end{table}
The results in Table \ref{table2} are undoubtebly far from optimal.  We reiterate that efficiency in computations has not 
been our main focus.

 \section{Discussion on the rate of convergence of the discretization schemes $\ml_m$}\label{discuss}
 The operator $\ml_m$ is not  Markov and its dominant eigenvalue $\rho$ is typically $> 1$. This hinders an estimate similar to that of $(I)$ in Theorem \ref{Th2}. In particular the sum $\sum_{q=1}^{n}\ml_m^q f^*_{\eta_m}$ cannot be controlled since $\ml_m^q f^*_{\eta_m}=\rho^q f^*_{\eta_m}$. Thus 
 the key trick for obtaining rate of convergence $\varepsilon\ln\varepsilon^{-1}$ 
 is unavailable for the scheme $\Pi_m$ (unless it happens that $\rho \leq 1$). 
 
 In principle, one can obtain rigorous, computational estimates for the scheme $\Pi_m$, through 
 direct application of Theorem \ref{Th1},  obtaining $L^{\infty}$-norm estimates
 in terms of the difference of the spectral projections
$$|\proj_1-\proj_{\rho}|\le\delta_{\text{c}}[aH^*+b{H^*}^2]\varepsilon^{\gamma_{\text{c}}},$$
where $\gamma_{\text{c}}=\frac{\ln (r_{\text{c}}/\alpha)}{\ln(M/\alpha)}<1$. From 
this estimate, it follows that for the map in (\ref{map}),  to achieve an error smaller than $10^{-2}$, one would need $\varepsilon<e^{-35.9}$. 
Needless to say, this is not a practical approach in general.

On the other hand, one expects that 
the both discretization schemes should be 
more efficient than indicated by the above theoretical 
estimates. We conclude this article with a simple numerical example (non-rigorous) that 
shows the kind of performance one should be looking for, even in the non-Markov 
discretization case. 

\begin{figure}[htbp] 
   \centering
   \includegraphics[width=4in]{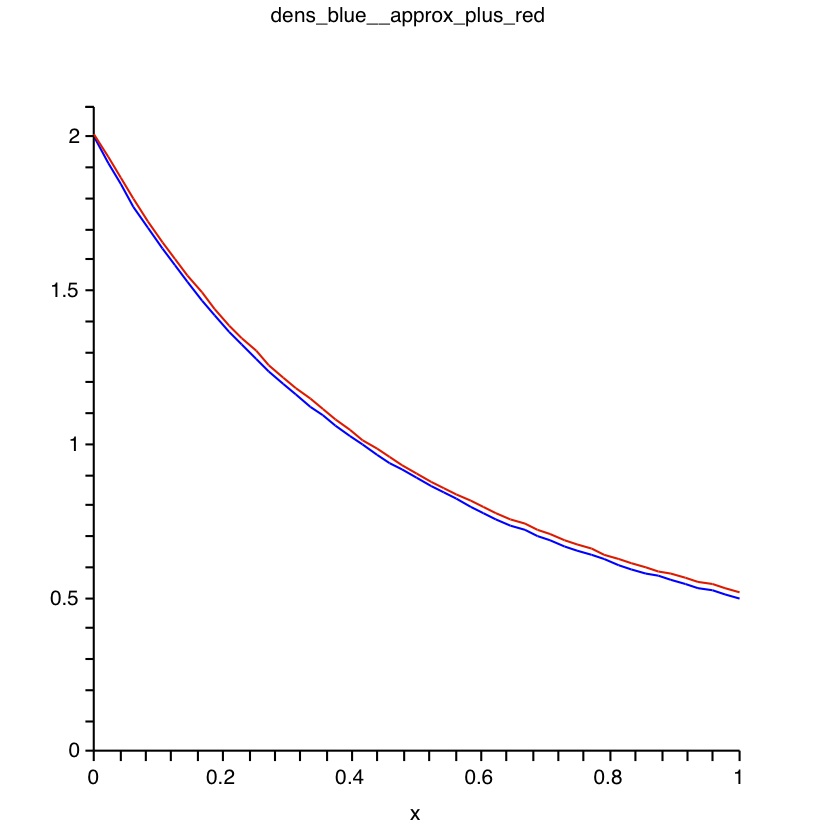} 
   \caption{The graphs of $f^*$ (blue) and $f_{10}^*+0.02$} (red) for $\Pi_m-$ scheme
   \label{fig:1}
\end{figure}

Consider the following Markov interval map which is only eventually expanding:
 \begin{equation}\label{map:ev}
\tau(x)=\left\{\begin{array}{cc}
\frac{2x}{1-x}&\mbox{for $0\le x\le \frac{1}{3}$}\\
\frac{1-x}{2x}&\mbox{for $\frac{1}{3}<x\le 1$}
\end{array}
\right. ,
\end{equation}
and whose invariant density is given by
$$f^*(x)=\frac{2}{(1+x)^2}.$$
Since the invariant density $f^*$ is known for this example, we can check the efficiency of our discretization scheme $\Pi_m$.  Using $m=10$, we plotted in Figures \ref{fig:1} and \ref{fig:2} the graphs of $f^*$, $f^*_{\eta_m}+0.02$ and $f^*-f_{\eta_m}^*$ respectively. As the figures show, our 
tolerance $\Delta$ is attained with only 10 bins; the efficiency of the discretization scheme $\Pi_m$ in achieving an $L^{\infty}$ approximation is pretty impressive.

 \begin{figure}[htbp] 
   \centering
   \includegraphics[width=4in]{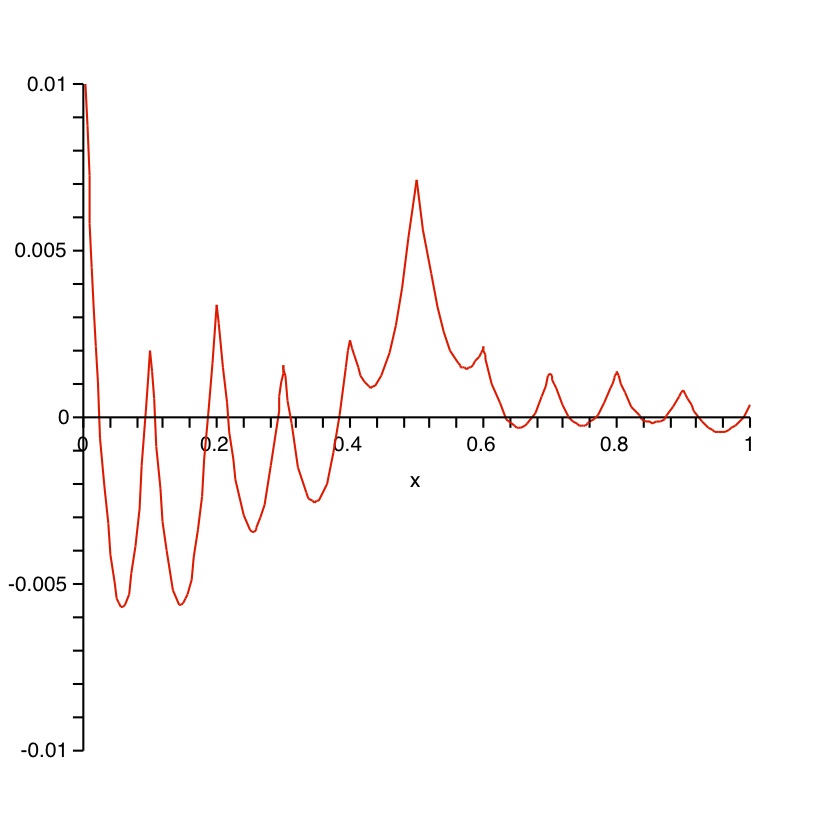} 
   \caption{The graph of $f^*-f_{10}^*$} for the $\Pi_m-$ scheme
   \label{fig:2}
\end{figure}

\bibliographystyle{amsplain}

\end{document}